\providecommand{\keywords}[1]
{
  \small	
  \textbf{\textit{Keywords---}} #1
}
\newtheorem{assum}{Assumption}
\newtheorem{thm}{Theorem}
\newtheorem{lem}{Lemma}
\theoremstyle{remark}
\newtheorem{rmk}{Remark}
\providecommand{\diff}{\mathrm{d}}
\providecommand{\oh}{o}
\providecommand{\Oh}{\mathcal{O}}
\providecommand{\eps}{\varepsilon}
\providecommand{\MC}{\mathcal{M}}
\providecommand{\vol}{\operatorname{vol}}
\providecommand{\reals}{\mathbb{R}}
\providecommand{\eps}{\varepsilon}
\providecommand{\expec}{\mathbb{E}}
\providecommand{\1}{\mathds{1}}
\providecommand{\tr}{\operatorname{tr}}
\providecommand{\var}{\operatorname{Var}}
\providecommand{\vect}{\operatorname{vec}}
\providecommand{\diag}{\operatorname{diag}}
 \providecommand{\vM}{\vol(\MC)}
\newcommand{\red}[1]{\textcolor{red}{#1}}
\newcommand{\blue}[1]{\textcolor{blue}{#1}}
\title{Quadratically Regularized Optimal Transport: \\  nearly optimal potentials and convergence of discrete Laplace operators}
\author[,$\mathsection$]{Gilles Mordant\footnote{G. Mordant gratefully acknowledges the funding by the DFG for CRC1456.}}
\author[,$\ddagger$]{Stephen Zhang\footnote{S. Zhang gratefully acknowledges funding from the Australian Government RTP Program.}}
\affil[$\mathsection$]{IMS, Universit\"at G\"ottingen}
\affil[$\ddagger$]{Mathematics \& Statistics, University of Melbourne}
\date{\today}
\begin{document}

\maketitle

\begin{abstract}
We consider the conjecture proposed in \citet{matsumoto2022beyond} suggesting that optimal transport with quadratic regularisation can be used to construct a graph whose discrete Laplace operator converges to the Laplace--Beltrami operator. We derive first order optimal potentials for the problem under consideration and find that the resulting solutions exhibit a surprising resemblance to the well-known Barenblatt--Prattle solution of the porous medium equation. Then, relying on these first order optimal potentials, we derive the pointwise $L^2$-limit of such discrete operators built from an i.i.d. random sample on a smooth compact manifold. Simulation results complementing the limiting distribution results are also presented.
\end{abstract}

{
\keywords{Regularised Optimal Transport, rates of dual potentials, manifold learning, diffusion, porous medium equation.}
}

\section{Introduction and main results}
\subsection{Discrete optimal transport with quadratic regularisation.}

In this entire paper, we will consider a $d$-dimensional compact smooth Riemannian manifold $(\MC,\rm g)$ isometrically embedded in $\reals^p$ via the embedding $\iota : \MC \hookrightarrow \reals^p$. In the sequel, we denote by $\iota_*$, the differential of this embedding.

Consider $\{ \iota(x_i) \}_{i=1}^N $ a set of points of the manifold embedded in the ambient Euclidean space. These can be random or deterministic. 
Then, let \[
C_{ij} = C(x_i, x_j) = \frac{1}{2} \| \iota(x_i) - \iota(x_j) \|_2^2
\] be the matrix of pairwise distances. Let $\mu = N^{-1} \sum_{i} \delta_{x_i}$ be the uniformly weighted empirical measure. The discrete quadratically regularised optimal transport (QOT) problem reads
\begin{align}
    \min_{\pi \in \Pi(\mu, \mu)} \langle C, \pi \rangle + \frac{\varepsilon}{2} \| \pi \|_2^2 &\Leftrightarrow \min_{\pi \in \Pi(\mu, \mu)} \| \pi + \varepsilon^{-1} C \|_2^2,
    \label{eq:qot}
\end{align}
where $\Pi(\mu, \mu) = \{ \pi : \pi \1 = \pi^\top \1 = \1 /N \}$ denotes the set of bistochastic couplings. Following \citet[Equation D]{lorenz2021quadratically}, we have the following duality result in terms of the dual potential $u$. 
\begin{align}
\label{eq: Dual}
    \sup_{u}\ \langle u, \mu \rangle - \frac{1}{4 \varepsilon} \left\| [u \oplus u - C]_+ \right\|_2^2,
\end{align}
where $[x]_+$ is the positive part of $x$. Denoting by $u^\star$ the optimal solution of~\eqref{eq: Dual} the relationship at optimality between the primal and dual variables is given by
\begin{align}
\label{eq: optimalPlan}
    \pi^\star = \frac{[u^\star \oplus u^\star - C]_+}{\varepsilon}.
\end{align}
As it is an optimal transport plan belonging to $\Pi(\mu,\mu)$, we recall the constraints 
\begin{equation}
\label{eq: Const}
    \sum_{i=1}^N \pi_{i,j}^\star = \frac{1}{N}, \qquad \forall j \in \{1, \ldots, N\},
\end{equation}
which are crucial to understand the optimal dual potentials.

\subsection{Discrete operators based on QOT.}

A consequence of the formula (\ref{eq: optimalPlan}) is that the optimal transport plan is sparse -- entries $\pi_{ij}^\star$ are identically zero whenever $C_{ij}$ becomes too large.
Because of this, \citet{matsumoto2022beyond} proposed to use the (rescaled) optimal transport plan as the adjacency matrix of a graph between pairs of points. The resulting weighted, undirected graph can in turn be used for downstream applications including semi-supervised learning, manifold learning, or dimensionality reduction in single cell RNA sequencing applications. The authors found that remarkable performance was achieved in these examples. More precisely, the weight matrix $W$ that they consider is given by 
\begin{equation}
\label{eq: GraphWeights}
W_{i,j} := \frac{\pi_{i,j}^\star}{\sum_{j=1}^N \pi_{i,j}^\star }.
\end{equation}
\citet{matsumoto2022beyond} further raise the question whether a discrete Laplace operator constructed from this matrix converges to a Laplace--Beltrami operator in the limit of infinitely many samples. This motivates the present paper.

\subsection{Main contributions and outline}

Our main contribution is twofold. First, we establish the correct asymptotic order of the potentials as a function of the regularisation parameter $\eps$ and the sample size $N$ in the discrete setting.  
This constitutes the content of Sections~\ref{sec: AnsatzCont} and~\ref{sec: Discrete}. Interestingly we find that the rates match with the solution of the porous medium equation, which is believed to be linked to optimal transport with a quadratic regularisation. Then, in Section~4, we then prove that, under suitable conditions, the discrete operator indeed can converge to the Laplace--Beltrami operator for random samples, as stated in Theorem~1. Section~5 considers the particular case of equidistant points on a circle. Section~6 can then be seen as empirical, finite sample size examples supporting the limits established.

\subsection{Manifold setting and notation}

Let us now describe a bit more the manifold setting that we consider for random samples. Let $X$ be a $p$-dimensional random variable whose range is supported on $\MC$. Let us further assume for simplicity that $X$ has a uniform distribution on the manifold, i.e., the density $\diff P(x) = \vol^{-1}(\MC) \diff x, \forall x \in \iota(\MC)$. Whenever we write $\expec$ or $\var$, unless otherwise denoted we mean to be with respect to $P$. Let us denote by $s(x)$, the scalar curvature of the manifold at $x$ and by $\mathbb{I}_x$ the second fundamental form of the isometric embedding $\iota$ at $x$. 
In the sequel, $\nabla$ will denote the covariant derivative while $\Delta$ will be the Laplace--Beltrami operator.
Further, set 
\[
\omega(x)=\frac{1}{\lvert S^{d-1}\rvert } \int_{S^{d-1}} \lVert \mathbb{I}_x(\theta, \theta) \rVert^{2} \diff \theta,
\]
as well as 
\[
\mathfrak{N}(x)=\frac{1}{\lvert S^{d-1} \rvert} \int_{S^{d-1}}  \mathbb{I}_x(\theta, \theta)  \diff \theta,
\]
where $S^{d-1}$ denotes the $(d-1)$-dimensional unit sphere in $T_x \MC$.

For the sake of simplicity, let us make the following assumptions. 

\begin{assum}
\label{assum: Rot}
The manifold $\MC$ is correctly shifted and rotated so that $\iota_* T_{x_0}\MC$ is spanned by $e_1, \ldots, e_d$.
\end{assum}

\begin{assum}
\label{assum: Assum2}
The manifold is properly rotated and translated so that $e_{d+1}, \ldots, e_p$ diagonalise the second fundamental form $\mathbb{I}_{x_0}$. 
\end{assum}


These two conditions are not particularly important, they just help simplify both notation and result statements.
Under these assumptions, we use the notation $\llbracket v_1, v_2\rrbracket$ for the vector $v$ whose $d$ first components are the vector $v_1$ and its $p-d$ last components are the vector $v_2$. The $p\times r$ matrix $\tilde{J}_{p,r}$ is then defined as 
\[
\tilde{J}_{p,r}:= \begin{pmatrix}
0_{p-r\times r} \\ I_{r\times r}
\end{pmatrix}
\]
In what follows, for some $\eta$ sufficiently small, define the fattened manifold in the embedded space by $\mathcal{N}$, i.e. the set $\mathcal{N}:= \{ x\in \reals^p: \inf_{y\in \MC} \lVert x-\iota (y) \rVert \le \eta\}$. 
We recall that the Laplace operator for a function $g$ on a certain embedded smooth manifold $\iota(\MC)$ is defined for $p \in \iota(\MC)$ as
\[
\Delta g( p) : = (\Delta_{\reals^p} g_{\text{ext}}) (p) 
\]
 with $\Delta_{\reals^p}$ the usual Laplace operator in the Euclidean space, $g_{\text{ext}}(x):= g ( \pi_\MC(x))$, $x\in \mathcal{N}$, and $\pi_\MC$ projects $\mathcal{N}$ onto the manifold.

%
%

\section{Ansatz for the potentials in the continuous case}
\label{sec: AnsatzCont}
An important question is understanding the relationship between the optimal potential $u^\star$ and the chosen regularisation parameter $\eps$. 
 For $x_0 \in \MC$,  it holds for $r$ sufficiently small\footnote{
For the entire paper, ``$r$ sufficiently small'' must be understood as $r$ being smaller than the injectivity radius of the manifold.
} that  
\begin{align}
\label{eq: B5}
\nonumber
\expec\left[ f(X; r) \1\{ \lVert X - \iota(x_0)\rVert\le r \}  \right] &= \frac{\lvert S^{d-1}\rvert}{d \vM} f(\iota(x_0); r)   r^d  \\ \nonumber 
&\qquad +
\frac{\lvert S^{d-1}\rvert}{d(d+2)}\Big(
\frac{1}{2\vM} \Delta f(x_0; r) +\frac{s(x_0)f(x_0; r)}{6\vM}  \\ 
&  \quad\qquad\qquad \qquad  \qquad \qquad + \frac{d(d+2)\omega(x_0)f(x_0; r)}{24 \vM}
\
\Big)r^{d+2} \\ \nonumber
 &\qquad +\Oh( f(x_0; r) r^{d+3}). 
\end{align}
This a slight variation of Lemma~B.5 of \citep{wu2018think} for a uniform density and in which the functions are allowed to depend on the parameter $r$. Because of this modification the asymptotic expansion has been slightly refined. 

Consider the continuous setting of the problem \eqref{eq:qot}: following \citet{lorenz2021quadratically} we write $\pi \in L^2(\MC)$ to be the density of a candidate transport plan w.r.t. product measure on $\MC \times \MC$, i.e. $\int \pi(x, y) \diff x \diff y = 1$. Then, the optimal transport plan $\pi^\star$ in the quadratically regularised problem must satisfy 
\[
    \frac{1}{\vM} \int_\MC \pi^\star(\iota(x_0), y) \diff y = \expec \left[ \pi^\star(\iota(x_0), X) \right] = \frac{1}{\vM^2}.
\]
Taking the relation $\pi^\star = \varepsilon^{-1} [u^\star \oplus u^\star - C]_+$ where $u^\star \in L^2(\MC)$ is the corresponding optimal dual potential, making the ansatz that $u^\star \sim \eps^\alpha$ and invoking \eqref{eq: B5} we have, for $\varepsilon \ll 1$,
\begin{align*}
\expec\left[ u^\star(\iota(x_0)) + u^\star(X) - \frac{1}{2} \| \iota(x_0) - X\|_2^2 \right]_+ &\sim \expec\left[ \left( \eps^\alpha - \lVert X - \iota(x_0)\rVert^2 \right) \1\{ \lVert X - \iota(x_0)\rVert\le \eps^{\alpha/2} \}  \right] \\
&\sim \frac{\lvert S^{d-1}\rvert}{d} \eps^{\tfrac{\alpha(d+1)}{2}}  + \Oh\left( \eps^{\tfrac{\alpha(d+2)}{2}}\right),
\end{align*}
(in the above multiplicative constants were dropped). The above quantity must behave asymptotically like $\varepsilon$ at leading order, and so matching exponents gives us
\[
\alpha+ \frac{d\alpha}{2} = 1 \Leftrightarrow \alpha = \frac{2}{2+d}.
\]
In the continuous case, the optimal potentials must thus behave in the first order like $\eps^\frac{2}{2+d}$.

There is a belief in the community that there should be some link between quadratically regularised optimal transport and a class of nonlinear partial differential equations known as the porous medium equation on $\reals^d$ for index $m = 2$ (see e.g. \citet{lavenant2018dynamical}), i.e., the equation
\[
\frac{\partial u}{\partial t }= \Delta(u^m), 
\]
where $u=u(x,t)$ and with an initial condition on $u$ at time $t=0.$ Starting from a Dirac mass of integral $\mathfrak{m}$ at the origin, the solution of the porous medium equation for $m =2$ is given by the Barenblatt-Prattle formula (\cite{vazquez2007porous}):
\[
u(x,t) = \max\left\{0, t^{-\frac{d}{2+d }}\left(\mathfrak{m} - \frac{ 1 }{4 (d+2)}\frac{ \lVert x\rVert^2 }{t^{\frac{2}{2+d}}}  \right)  \right\}. 
\]
or, rewriting terms, 
\[
u(x,t) = \max\left\{0, t^{-1}
\left(\mathfrak{m}\ t^{\frac{2}{2+d}}  - \frac{ 1 }{4 (d+2)} \lVert x\rVert^2   \right)  \right\}. 
\]
As the porous medium equation conserves mass, the integral of $u(x,t)$ over $\reals^d$ is $\mathfrak{m}$. A key property of the porous medium equation which distinguishes it from the standard diffusion equation is that the solution remains compactly supported. This is a property that also applies to the transport plans derived from quadratically regularized optimal transport (\cite{lorenz2021quadratically}). 

Perhaps closer to the theory of optimal transport, the porous medium equation of index $m$ can also be understood as the 2-Wasserstein gradient flow of the Tsallis entropy of order $m$ (see for example, the discussion in \citet{peyre2015entropic}). The Tsallis entropy generalizes the Gibbs entropy: for $m = 1$, it coincides with the Gibbs entropy, while for $m = 2$ it is corresponds to the squared $L_2$ norm of the density. It is remarkable that the squared $L_2$ norm is the functional that generates the porous medium equation as Wasserstein gradient flow, which is also the regularizing functional used in quadratically regularized optimal transport exhibiting analogous sparsity and scaling behaviour. Furthermore in the entropy regularized setting where $m = 1$, optimal transport enjoys the celebrated connection to a theory of large deviations for Brownian motions and the Schr\"odinger problem (\cite{leonard2013survey}). One interesting theoretical question would be whether similar connections could hold in more general cases, e.g. $m > 2$. 

Although our work does not formally establish the existence of such a connection, it is interesting that the same types of exponents appear and that the solution of the porous medium equation is so close in form to the solution of the quadratically regularised optimal transport problem.

%
%

\section{Optimal potentials in the discrete case}
\label{sec: Discrete}

We now turn the to study of the optimal rates for the potentials in the discrete case.
Let us slightly change the setting and consider a sample of size $N+1$ where one point, $x_0\in \MC$, is fixed and the remaining ones are an i.i.d.\ random sample on the manifold. Set $X_0=\iota(x_0)$. Relabel the sample points so that $\lVert X_0 - X_1 \rVert^2 \le \lVert X_0 - X_2 \rVert^2\le \ldots \le \lVert X_0 - X_N \rVert^2$.

\begin{rmk}[No loss of generality in choosing $X_0$]
\label{rmk: FixPoint}
In the results below, the same analysis has to be carried out for each point $X_i$. One can thus view our (convenient) choice of working with one distinguished, deterministic point as a conditioning on an arbitrary $X_i$. Still, as the expectations of the quantities for $X_0$ fixed are constants with uniformly decaying terms, the reasoning would apply for each $X_i$ using the tower property of conditional expectation.   
\end{rmk}

Using Lemma B.5. with $f = 1$, we get that the quantile function of the local distribution of squared distances at $x_0$ is approximately 

\[
p \mapsto \left(  \frac{p}{\lvert S^{d-1} \rvert d^{-1} \vol(\MC)^{-1}}\right)^{2/d}, 
\]
so that the duality constraint in the discrete problem is approximately 
\begin{equation}
\label{eq: ApproxDual}
 \frac{N+1}{\eps} \sum_{j=0}^N \left(u^\star(X_0)+ u^\star(X_j) - \left(  \frac{U_{(j:N)}}{\lvert S^{d-1} \rvert d^{-1} \vol(\MC)^{-1}}\right)^{2/d} \right)_+=1,   
\end{equation}
where $U_{(j:N)}$ is the $j$-th sorted element of an i.i.d.\ sample of size $N$ of random variables uniformly distributed on $[0,1]$ and we take $U_{(0:N)} = 0$. Note that although we write down all $N$ order statistics $U_{(j:N)}$ and the expression for the quantile function is only a good approximation for $p \ll 1$, as long as $u(x_0) + u(X_j)$ is small, only the first few terms will be nonzero.

We get the equivalent problem
\begin{equation}
\label{eq: ApproxDisc}
\frac{N+1}{\eps (N+1)^{2/d}}\ \kappa_d\ \sum_{j=0}^N \left( \tilde{u}(x_0)+ \tilde{u}(X_j) - U_{(j:N)}^{2/d} \right)_+=1, 
\end{equation}
where we have set
 \[\tilde{u}(\cdot) = \frac{(N+1)^{2/d}}{\kappa_d} u^\star(\cdot),\]
and
 \[
\kappa_d := \left( \frac{\vol(\MC) d}{\lvert S^{d-1}\rvert} \right)^{2/d}.
 \]
 Choose $k^{2/d} < 2 \tilde{u} \leq (k+1)^{2/d} $ and plug $\tilde{u}$ (choosing a constant approximation to the potential) as a choice for the potential in \eqref{eq: ApproxDisc}. 
 It yields, 
\begin{equation}
\label{eq: ConstDisc}
 \frac{N+1}{\eps (N+1)^{2/d}}\ \kappa_d\ \left[ 2(k+1)\tilde{u} - \sum_{j=1}^k U_{(j:N)}^{2/d} \right]=1.
\end{equation}

Let us turn to the size of the sum in  \eqref{eq: ConstDisc}.
First, basic calculations show that 
\[
\expec \left[ \left (U_{(j:N)}\right)^{2/d} \right] = \frac{\Gamma(\tfrac2d + j)\Gamma(N+1)}{\Gamma(\tfrac2d + N+1 )\Gamma( j)},
\]
So that understanding the problem \eqref{eq: ApproxDual}, even in expectation and for constant potentials is not so easy for $d>2$. 

For $d=1$, we get 
\[
\sum_{j=1}^k \expec \left (U_{(j:N)}\right )^{2/d} = \frac{1}{(N+2)(N+1)} \left [ \frac13  k(k+1)(k+1) \right],
\]
while for $d=2$, it holds that 
\[
\sum_{j=1}^k \expec \left (U_{(j:N)}\right )^{2/d} = \frac{1}{(N+1)} \left [ \frac12  k(k+1) \right].
\]
In general,  
\[
\sum_{j=1}^k \expec \left (U_{(j:N)}\right )^{2/d} \approx  (N+1)^{-2/d} \sum_{j=1}^k j^{2/d}\left(1+ \frac{2-d}{d^2 j} +\Oh\left(\frac{1}{d^2}\right)\right),
\]
so that the leading order is 
\[
(N+1)^{-2/d} \frac{d}{d+2} k^{\frac{d+2}{d}}.
\]
Equation~\eqref{eq: ConstDisc} then becomes, 
\[
\frac{\eps N^{2/d -1}}{\kappa_d} \approx \left(1 - \frac{d}{d+2}\right) k^{\frac{d+2}{d}} = \left( \frac{2}{d+2}\right) k^{\frac{d+2}{d}}
\]
so that
\[
\tilde u \approx \frac12   \left( \frac{2+d}{2\kappa_d}\right)^{\frac{
2}{d+2}} \eps^{\frac{2}{d+2}} N^{\frac{(2-d)2}{d(d+2)}} 
\]
and then 
\[
  u \approx  \kappa_d^{\frac{d}{d+2}} \left( \frac{d+2}{2}\right)^{\frac{
2  }{d+2} } \eps^{\frac{2}{d+2}} N^{\frac{(2-d)2}{d(d+2)}-2/d} =    \left(  \frac{\vol(\MC) d}{\lvert S^{d-1}\rvert} \ \frac{(d+2)}{2}\right)^{\frac{
2}{d+2}} \eps^{\frac{2}{d+2}} N^{\frac{-4}{(d+2)}}
\]
Finally, using this first order to approximately solve the equation yields that the optimal potential must behave like 
\[
K_{\eps, N}:=\left( \frac{2\kappa_d}{d+2}\right)^{\frac{
2}{d+2}} \eps^{\frac{2}{d+2}}N^{-\frac{4}{(d+2)}} =:C_d\eps^{\frac{2}{d+2}}N^{-\frac{4}{(d+2)}}
\]

These results provide a reasonable ansatz, still these are only approximations. 
We thus now assess the quality of this first order approximation of the solution by evaluating how the dual constraints are fulfilled when plugging-in the first-order approximation of the solution.

\subsection{Validity of the derived finite sample rate}
\label{sec: Validity}
We will use the function
\[
 f(y) = C_d \eps^{2/(d+2)} N^{-4/(d+2)} - \lVert \iota(x_0)- y \rVert^2  =  K_{\varepsilon, N} - \lVert \iota(x_0)- y \rVert^2 
\]
 and apply to the result of Lemma B.5 from \citet{wu2018think} to evaluate the constraints arising from the dual formulation of the problem, recall~\eqref{eq: optimalPlan} and~\eqref{eq: Const}. Doing so,  one gets 
\begin{align*}
&\expec \left( \frac{N+1}{\eps} \sum_{j=0}^N \left( K_{\eps,N} - \lVert X_j - \iota(x_0) \rVert^2 \right)_+ \right)  = \frac{N+1}{\varepsilon} \left\{ K_{\varepsilon, N} + N \expec \left[ f(X) \1\left\{ \| \iota(x_0) - X \| \leq K_{\varepsilon, N}^{1/2} \right\} \right] \right\} \\
  & \qquad = \frac{N+1}{\varepsilon} K_{\varepsilon, N} + \frac{N(N+1)}{\varepsilon} \frac{|S^{d-1}|}{d \vol(\MC)} K_{\varepsilon, N}^{1 + d/2} \\
    & \qquad\qquad+ \frac{N(N+1)}{\varepsilon} \frac{|S^{d-1}|}{d(d+2)\vol(\MC)} K_{\varepsilon, N}^{1 + d/2} \left[ -d + \frac{s(x_0) K_{\varepsilon, N}}{6} + \frac{d(d+2) \omega(x_0) K_{\varepsilon, N}}{24} \right] \\
    & \qquad\qquad+ \frac{N(N+1)}{\varepsilon} \Oh(K_{\varepsilon, N}^{\frac{5 + d}{2}} ) \\
      & \qquad = \frac{N+1}{\varepsilon} K_{\varepsilon, N} + \frac{N(N+1)}{\varepsilon} \frac{|S^{d-1}|}{ \vol(\MC)} K_{\varepsilon, N}^{1 + d/2} \left(\frac{1}{d} - \frac{1}{d+2} \right) + \Oh\left( \frac{N(N+1)}{\varepsilon} K_{\varepsilon, N}^{2 + d/2}  \right) 
\end{align*}

In the display above, terms have orders $\Oh(N \varepsilon^{-1} K_{\varepsilon, N}) = \Oh(\varepsilon^{-d/(d+2)} N^{(d-2)/(d+2)})$,  $\Oh(N^2 \varepsilon^{-1} K_{\varepsilon, N}^{2 + d/2})$ and we remark that $\Oh(N^2 \varepsilon^{-1} K_{\varepsilon, N}^{1 + d/2}) = \Oh(1)$. 
This latter term is the leading order. Then, to fulfill the constraint, we need that
\[ 
C_d^{\frac{2+d}{2}} \frac{|S^{d-1}|}{\vol(\MC)} \left( \frac{1}{d} - \frac{1}{d+2} \right) = 1
\]
so that 
\[
C_d=  \left( \frac{\vol(\MC)}{|S^{d-1}|} \frac{d(d+2)}{2}  \right)^{\frac{2}{d+2}},
\]
which matches with the expression above.
One gets that the chosen rate gives the correct constraint in expectation at the first order. 

 One can also rewrite the conditions that $\Oh(\varepsilon^{-d/(d+2)} N^{(d-2)/(d+2)})= \oh(1)$ as 
 $\varepsilon^{-d} N^{(d-2)} \to 0 $ and  the condition $\Oh(N^2 \varepsilon^{-1} K_{\varepsilon, N}^{2 + d/2})= \Oh(K_{\eps,N})= \oh(1)$  as $\varepsilon^{2} N^{-4} \to 0$. Note that the latter condition was already somewhat required to apply  Lemma B.5 from \citet{wu2018think}. 
Together, these results indicate that the asymptotic scaling on $\varepsilon$ is
 $N^{1 - 2/d} \ll \varepsilon \ll N^2$  for the constraints to be asymptotically fulfilled in expectation. 
 Note that in this analysis that $\varepsilon$ need not go to zero asymptotically. Rather, the need is for $\varepsilon$ to be asymptotically sufficiently small relative to $N$. See the remark below. 

\begin{rmk}
 \label{rmk: Uniformly}
    In the developments above, the result holds uniformly in $x_0$ under quite mild assumptions as, for a closed\footnote{Recall that a manifold is closed if it is compact and without boundary. } and smooth manifold, the different kinds of curvatures appearing in the expansions are bounded, recall Remark~\ref{rmk: FixPoint}.
\end{rmk}
Because of Remark~\ref{rmk: Uniformly}, one can derive that all the constraints will asymptotically be fulfilled in expectation when replacing the sum of optimal potentials by $K_{\eps,N}$. 

\begin{rmk}
There is a difference in scaling between the discrete and continuous settings in our analysis -- to get empirical input distributions that are consistent with the continuous setting in the limit of large $N$, for samples $X_1, \ldots, X_N$ we take $\hat{\mu} = N^{-1} \sum_i \delta_{X_i}$ as the corresponding empirical distribution. Suppose $\pi_{ij}$ is an admissible coupling for such a discrete problem. Then $\pi$ is concentrated on the support of $\hat{\mu} \otimes \hat{\mu}$ and admits a density, $(\diff \pi / \diff \hat{\mu} \otimes \diff \hat{\mu})(X_i, X_j) = \pi_{ij} / N^2$. Then, note that the corresponding empirical entropy term would behave like
\begin{align*}
    H(\pi | \hat{\mu} \otimes \hat{\mu}) = \int \diff \pi \log\left( \frac{\diff \pi}{\diff \hat{\mu} \otimes \diff \hat{\mu}} \right) = \sum_{ij} \pi_{ij} \log\left( \frac{\pi_{ij}}{N^2} \right)
\end{align*}
Up to a constant, this is equal to the discrete entropy of $\pi$, i.e. $\sum_{ij} \pi_{ij} \log \pi_{ij}$. Thus, we expect no scaling behaviour between $N$ and the entropic regularizer. 

On the other hand, for the quadratic regularizer, one would have
\begin{align*}
    \| \pi \|^2_{\hat{\mu} \otimes \hat{\mu}} = \int \frac{\diff \pi}{\diff \hat{\mu} \otimes \diff \hat{\mu}} \diff \pi = \sum_{ij} \pi_{ij} \frac{\pi_{ij}}{N^2} = N^{-2} \| \pi \|^2. 
\end{align*}
Thus, there is the presence of a factor $N^{-2}$. This can be understood in that $N$ appears in the density of $\pi$ w.r.t. empirical product measure, which is lost as an additive term in the case of a log, but remains in the quadratic case. Thus, noting that $\varepsilon \| \pi \|_{\hat{\mu} \otimes \hat{\mu}}^2 = (\varepsilon N^{-2}) \| \pi \|_2^2$, it is apparent that the requirement that $\varepsilon \to 0$ in the continuous setting corresponds to $\varepsilon N^{-2} \to 0$ in the discrete setting. This is in agreement with the scaling we derived earlier.

\end{rmk}

\subsection{Replacing the optimum by a uniform approximation}

Similarly to \citet[Lemma~3.1]{lorenz2021quadratically}, 
the Newton Hessian of the optimisation problem~\eqref{eq: Dual} is 
\[
\diag (\sigma \boldsymbol{1}_{N+1})
\] 
where 
\[
\sigma_{ij} = \begin{cases}
    1 & \text{ if } u_i + u_j -C_{ij} \geq 0,  \\
    0 & \text{ otherwise}.
\end{cases}
\]
Because of the constraints, the potential must be chosen such that $u_j>0, \forall j\le N+1$. 
The function to optimise is thus strictly concave for the set of such potentials and thus admits a unique optimum.

The update step in the semismooth Newton algorithm used in \citet[Appendix A]{matsumoto2022beyond}\footnote{Note that the measures in their paper are not probability measures, which explains the slight difference.} and originally developed in \citet{lorenz2021quadratically} takes the form 
\[
G_{i,i}^{-1} \left(\sum_j (K_{\eps,N} -c_{i,j})_+ - \frac{\eps}{N+1} \right), \quad 1\le i\le N, 
\]
when the regularisation parameter of the algorithm is set to zero.
From \eqref{eq: B5} again, it holds that 
\[
\expec G_{ii} = \Oh(N K_{\eps,N}^{d/2}).
\]
From this and the computations of Section~\ref{sec: Validity}, the expectation of the update step is of order
\begin{align*}
\frac{1}{N K_{\eps,N}^{d/2} } \left( \Oh(K_{\eps,N}) + \Oh\left( N K_{\eps,N}^{2+ d/2}\right)\right) &= \Oh \left ( 
\eps^{\frac{2-d}{d+2}} N^{\frac{-2(2-d) - (d+2)}{d+2}}\right) +\Oh(K_{\eps, N}^2) \\
&= \Oh\left( 
\eps^{\frac{2-d}{d+2}} N^{\frac{d-6 }{d+2}}\right)+ \Oh(K_{\eps, N}^2)\\
&= \Oh\left( 
\eps^{\frac{-d}{d+2}} N^{\frac{d-2 }{d+2}} \ \eps^{\frac{2}{d+2}} N^{\frac{-4 }{d+2}}\right)+ \Oh(K_{\eps, N}^2),
\end{align*}
which goes to zero in view of the conditions on $\eps$ and $N$ mentioned above.

\begin{rmk}[QOT, nonparametric statistics and optimality]
The form of the optimal transport plan is very much alike an Epanechnikov kernel, which  is very often used in nonparametric statistics. The latter kernel is 
\[
 u \mapsto \frac{\Gamma(2+d/2)}{\pi^{\frac{d}{2}}}(1 - u^\top u) \1_{\{ u^\top u \le 1\}}.
\]
Even though this statement is debated \citep[Section~1.2.4]{tsybakov2008introduction}, the Epanechnikov kernel is often claimed to be the optimal nonnegative kernel in terms of asymptotic MISE for the estimation of a twice differentiable density. Thus, the compact support and the fact that the optimal dual potential is a function\textemdash which is likely more adaptive to the data than a uniform bandwidth, might explain the outstanding performances observed in the examples of \citet{matsumoto2022beyond}.
\end{rmk}


\section{Graphs Laplacians based on Quadratically Regularized OT}

\subsection{Limiting operators}

Before proving the main result, we state two useful lemmas. 
\begin{lem}
\label{lem: LocCov}
For fixed $x_0 \in \MC$, $r$ sufficiently small and $X$ uniformly distributed on $\MC$ under Assumptions~\ref{assum: Rot} and ~\ref{assum: Assum2}, it holds that 
\begin{eqnarray*}
\lefteqn{\expec\left( f(X;r) (X- \iota(x_0))  (X- \iota(x_0))^\top  \1\{ \lVert X - \iota(x_0)\rVert\le r \}  \right)} \\
& \hspace{35mm}  = \frac{\lvert S^{d-1}\rvert}{d(d+2) \vol(\MC)} f(\iota(x_0);r)  r^{d+2}  
 \left(
 \begin{pmatrix} 
 I_{d\times d} & 0 \\ 0 & 0
 \end{pmatrix} 
 + \Oh(r^2)\right)
 \end{eqnarray*}
\end{lem}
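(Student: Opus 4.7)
The plan is to reduce the computation to a Euclidean integral over a ball in the tangent space by working in normal (exponential) coordinates centered at $x_0$, in the spirit of Lemma~B.5 of \citet{wu2018think}. First I would set $y = \exp_{x_0}^{-1}(x) \in T_{x_0}\MC \cong \reals^d$ and Taylor expand the embedding as
\[
\iota(\exp_{x_0}(y)) - \iota(x_0) = \iota_* y + \tfrac{1}{2}\, \mathbb{I}_{x_0}(y,y) + \Oh(\|y\|^3),
\]
with Riemannian volume element $dV = (1 + \Oh(\|y\|^2))\, dy$ and with the ambient ball $\{x:\|x-\iota(x_0)\|\le r\}$ pulling back to $\{y:\|y\|\le r(1+\Oh(r^2))\}$. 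Assumption~\ref{assum: Rot} makes $\iota_* y = \llbracket y, 0_{p-d}\rrbracket$, so the tangent component of the displacement populates the first $d$ coordinates; Assumption~\ref{assum: Assum2} ensures that $\mathbb{I}_{x_0}(y,y)$ lies in the last $p-d$ coordinates in diagonal form, giving a clean tangent/normal block decomposition of the outer product.

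Next I would decompose $(X-\iota(x_0))(X-\iota(x_0))^\top$ into a tangent-tangent, tangent-normal, and normal-normal block. The tangent-tangent block is at leading order $\iota_* y (\iota_* y)^\top = \bigl(\begin{smallmatrix} yy^\top & 0 \\ 0 & 0 \end{smallmatrix}\bigr)$, and the standard isotropic identity
\[
\int_{\{\|y\|\le r\}} y y^\top\, dy = \frac{\lvert S^{d-1}\rvert}{d(d+2)}\, r^{d+2}\, I_{d\times d},
\]
combined with the constant $f(\iota(x_0);r)/\vol(\MC)$ extracted from the integrand, produces exactly the claimed leading term.

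The remainder of the proof is to verify that every other contribution is absorbed into the $\Oh(r^2)$ correction. The tangent-normal cross block has integrand of the form $y_i \cdot \mathbb{I}_{x_0}(y,y)$, which is cubic in $y$ and hence integrates to zero over $\{\|y\|\le r\}$ by oddness; the next order is $\Oh(\|y\|^4)$, giving $\Oh(r^{d+4})$. The normal-normal block is bounded by $\|\mathbb{I}_{x_0}(y,y)\|^2 = \Oh(\|y\|^4)$ and likewise contributes $\Oh(r^{d+4})$. The Taylor expansion $f(X;r) = f(\iota(x_0);r) + \nabla f \cdot (X-\iota(x_0)) + \Oh(\|y\|^2)$, the Jacobian factor $1 + \Oh(\|y\|^2)$ coming from the exponential map, and the domain perturbation from $\{\|X-\iota(x_0)\|\le r\}$ to $\{\|y\|\le r\}$ each contribute either odd-in-$y$ pieces that vanish by symmetry or even pieces of relative order $\Oh(r^2)$ against the leading $\int y y^\top\, dy$.

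The only real obstacle is careful bookkeeping: one must track that each of the several perturbations (second fundamental form, metric distortion, evaluation of $f$ away from $\iota(x_0)$, and reshaping of the embedded ball) enters only at relative order $r^2$, and that the a priori worrisome odd-order monomials cancel by the rotational symmetry of Euclidean ball integrals. Once those cancellations are in place, the rest of the argument is a routine comparison with the leading isotropic integral.
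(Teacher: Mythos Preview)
Your proposal is correct and is precisely the approach the paper intends: the paper's own proof consists solely of the sentence ``The proof follows along the same lines as Proposition~3.1 in \citet{wu2018think},'' and what you have outlined---passing to normal coordinates, expanding the embedding to second order via the second fundamental form, extracting the leading tangent--tangent block through the isotropic moment $\int_{\|y\|\le r} yy^\top\,dy = \frac{|S^{d-1}|}{d(d+2)}r^{d+2}I_d$, and checking that the cross, normal, metric, domain, and $f$-expansion corrections are all $\Oh(r^{d+4})$ by parity or by order counting---is exactly that argument carried out in detail.
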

\begin{proof}
The proof follows along the same lines as Proposition~3.1 in \citet{wu2018think}.
\end{proof}

\begin{lem}
\label{lem: FourthOrd}
For fixed $x_0 \in \MC$, $r$ sufficiently small and $X$ uniformly distributed on $\MC$ under Assumptions~\ref{assum: Rot} and ~\ref{assum: Assum2}, it holds that 
\begin{align*}
\expec \left[
f(X;r) e_k^\top(X-\iota(x_0))(X-\iota(x_0))^\top e_l e_m^\top(X-\iota(x_0))(X-\iota(x_0))^\top e_n \1\{ \lVert X - \iota(x_0)\rVert\le r \}
\right]\\
=
\frac{f(\iota(x_0);r)}{(d+4)\vM} r^{d+4} C_{k,l,m,n} + \Oh(r^{d+5}),
\end{align*}
where 
\[
C_{k,l,m,n} = \int_{S^{d-1}} \langle \iota_*\theta , e_k \rangle
\langle \iota_*\theta , e_l \rangle
\langle \iota_*\theta , e_m \rangle
\langle \iota_*\theta , e_n \rangle \diff \theta
\]

\end{lem}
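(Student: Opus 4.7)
The plan is to mirror the proof of Lemma~\ref{lem: LocCov} (i.e.\ Proposition~3.1 in \citet{wu2018think}) by passing to geodesic normal coordinates at $x_0$. First I would parametrize points inside the geodesic ball of radius $r$ around $x_0$ as $\exp_{x_0}(t\theta)$ with $t \in [0,r]$ and $\theta \in S^{d-1} \subset T_{x_0}\MC$. In these coordinates the uniform probability density on $\MC$ pushes forward to $\vM^{-1}\bigl(1 + \Oh(t^2)\bigr)\, t^{d-1}\, \diff t\, \diff \theta$, the standard Riemannian volume expansion (the $\Oh(t^2)$ correction encodes Ricci curvature and is, crucially for the error bookkeeping, even in $t$).

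Next, I would Taylor-expand the embedding at $x_0$: under Assumptions~\ref{assum: Rot} and~\ref{assum: Assum2},
\[
X - \iota(x_0) = t\,\iota_*\theta + \tfrac{t^2}{2}\,\mathbb{I}_{x_0}(\theta,\theta) + \Oh(t^3),
\]
so that for any coordinate direction $e_j$,
\[
\langle X - \iota(x_0), e_j\rangle = t\,\langle \iota_*\theta, e_j\rangle + \tfrac{t^2}{2}\,\langle \mathbb{I}_{x_0}(\theta,\theta), e_j\rangle + \Oh(t^3).
\]
Multiplying four such factors produces a leading piece $t^4 \prod_{j \in \{k,l,m,n\}} \langle \iota_*\theta, e_j\rangle$, while every remaining term carries at least one extra power of $t$. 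Combined with $f(X;r) = f(\iota(x_0);r) + \Oh(t)$, the volume expansion above, and integration of $t^{d-1}\diff t$ over $[0,r]$, the leading-order contribution is
\[
\frac{f(\iota(x_0);r)}{\vM}\int_{S^{d-1}}\prod_{j \in \{k,l,m,n\}}\langle \iota_*\theta, e_j\rangle\,\diff \theta \int_0^r t^{d+3}\,\diff t \;=\; \frac{f(\iota(x_0);r)}{(d+4)\,\vM}\,r^{d+4}\,C_{k,l,m,n},
\]
as desired.

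The main obstacle is bookkeeping the remainder. There are three independent sources of correction: the cubic remainder in the embedding expansion, the $\Oh(t^2)$ curvature correction to the volume form, and the first-order Taylor remainder for $f(\cdot;r)$ around $\iota(x_0)$. Each of these contributes an additional power of $t$ to the integrand, so after integration against $t^{d-1}$ on $[0,r]$ each yields a remainder of order $r^{d+5}$; summing gives the stated error. One subtlety worth highlighting: whenever some index $e_j$ with $j>d$ lies in the normal directions, Assumption~\ref{assum: Rot} forces $\langle \iota_*\theta, e_j\rangle = 0$, so $C_{k,l,m,n}=0$ and the ``leading'' term vanishes. In that regime the first non-zero contribution instead comes from pairing the $\tfrac{t^2}{2}\mathbb{I}_{x_0}(\theta,\theta)$ piece with three tangential factors, producing an integrand of order $t^{d+4}$ and hence an $\Oh(r^{d+5})$ contribution that is consistent with the stated remainder.
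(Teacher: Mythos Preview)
Your proposal is correct and follows essentially the same route as the paper's proof: pass to geodesic polar coordinates via $\exp_{x_0}(t\theta)$, expand $\iota\circ\exp_{x_0}(t\theta)-\iota(x_0)$, the volume element, and $f$ to first nontrivial order, then integrate $t^{d+3}$ over $[0,r]$ and collect remainders. One small point of precision: the indicator is for the \emph{ambient} ball $\{\lVert X-\iota(x_0)\rVert\le r\}$, not the geodesic ball, so in these coordinates the radial upper limit is $\tilde r = r + \Oh(r^3)$ rather than $r$ (the paper records this explicitly); the resulting discrepancy in the leading integral is $\Oh(r^{d+6})$ and is absorbed in your $\Oh(r^{d+5})$ remainder, so nothing in your argument changes.
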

\begin{proof}
First set 
\[
\tilde{B}_r(x_0):=\iota^{-1}(B_r^{\reals^p}(\iota(x_0))\cap \iota(\MC)).
\]
Then, the quantity of interest can be written
\[
\mathcal{I}:=\frac{1}{\vM}\int_{\tilde{B}_r(x_0)} 
\langle \iota(y)-\iota(x_0) , e_k \rangle
\langle \iota(y)-\iota(x_0) , e_l \rangle
\langle \iota(y)-\iota(x_0) , e_m \rangle
\langle \iota(y)-\iota(x_0) , e_n \rangle f(y)  \diff V(y).
\]
Recalling that for $(t,\theta )\in [0, \infty)\times S^{d-1}$
\begin{align*}
 \iota \circ \exp_{x_0} (\theta t) -\iota(x_0) &= \iota_* \theta t +\Oh(t^2)\\
\tilde r &= r + \Oh(r^3)\\
\diff V(\exp_{x_0} (\theta t))&= t^{d-1} + \Oh(t^{d+1})\\
f(\exp_{x_0} (\theta t))&= f(x_0) + \Oh(t),
\end{align*}
it holds that 
\begin{align*}
\mathcal{I}&= \frac{1}{\vM} \int_{S^{d-1}}\int_0^{\tilde r} f(x_0) t^{d+3} \langle \iota_*\theta , e_k \rangle
\langle \iota_*\theta , e_l \rangle
\langle \iota_*\theta , e_m \rangle
\langle \iota_*\theta , e_n \rangle + \Oh(t^{d+4}) \diff t \diff \theta \\
& =\frac{f(x_0)}{\vM(d+4)} r^{d+4}\int_{S^{d-1}}\langle \iota_*\theta , e_k \rangle
\langle \iota_*\theta , e_l \rangle
\langle \iota_*\theta , e_m \rangle
\langle \iota_*\theta , e_n \rangle \diff \theta + \Oh(r^{d+5});
\end{align*}
as claimed.
\end{proof}

We can now state our main theorem. Note that we consider functions defined on the ambient space, $\reals^p$ as opposed to only on $\MC$, since in manifold learning $\MC$ is unknown and the operator will thus be applied to function on $\reals^p$.  
\begin{thm} 

Consider $g \in C^2$, $g: \reals^p \to \reals$. Denote by $Q_0$ the Hessian of $g$ at $x_0$. To simplify notation, set $X_0=x_0$. Take a sample $\{\iota(X_j)\}_{j=1}^N$ from the uniform distribution on $\MC$ which is embedded in $\reals^p$. Then, under Assumptions~\ref{assum: Rot}~and~\ref{assum: Assum2}, defining 
 \[
 \Delta^{OT} g(X_0) := \sum_{j=0}^N W_{0,j}^\eps \big( g(X_0) -g(X_j)\big), 
 \]
 with $W^\eps$ the approximate solution of the quadratically regularised OT problem, it holds that, 
 \[
- 2 K_{\eps, N}^{-1}  \  \Delta^{OT} g(X_0) \xrightarrow{L^2}  d L_0 \left\llbracket  0 ,
 \frac{\tilde{J}_{p,p-d}^\top \mathfrak{N}(x_0)}{2}
\right\rrbracket  +\frac12\tr \left[Q_0 \begin{pmatrix} 
 I_{d\times d} & 0 \\ 0 & 0
 \end{pmatrix} \right],
 \]
 provided that $K_{\eps, N} \to 0$ and $N/\eps \to 0$ when $N\to \infty$.
\end{thm}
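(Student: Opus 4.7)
The plan is to substitute into $\Delta^{OT}g(X_0)$ the approximate weights $W^\eps_{0,j}\approx (N+1)\eps^{-1}[K_{\eps,N} - C_{0,j}]_+$ derived in Section~\ref{sec: Discrete} (whose error in the dual constraints was controlled in Section~3.2), and then to Taylor-expand
\[
g(X_j)-g(X_0)=\nabla g(X_0)^\top (X_j-X_0)+\tfrac12(X_j-X_0)^\top Q_0(X_j-X_0)+\Oh(\|X_j-X_0\|^3).
\]
This decomposes $-2K_{\eps,N}^{-1}\Delta^{OT}g(X_0)$ into a linear, a quadratic, and a cubic-remainder piece. For each, I show (a) convergence of the mean to the prescribed limit and (b) vanishing variance; together these yield $L^2$ convergence.

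For the mean computation, everything reduces to evaluating the moments $\expec[(K_{\eps,N}-\|X-\iota(x_0)\|^2)_+ (X-\iota(x_0))^{\otimes k}]$ for $k=1,2$ over a single random sample point $X$. The linear ($k=1$) moment is where the curvature of the embedding crucially enters: using normal coordinates at $x_0$, one has $\iota(\exp_{x_0}(\theta t))-\iota(x_0) = \iota_*\theta\, t + \tfrac12 \mathbb{I}_{x_0}(\theta,\theta)\, t^2 + \Oh(t^3)$, so $\int_{S^{d-1}}\iota_*\theta\, d\theta=0$ kills the leading tangential term, and the purely normal quadratic correction integrated against $\mathbb{I}_{x_0}$ produces $\mathfrak{N}(x_0)$ at leading order $K_{\eps,N}^{(d+4)/2}$. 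After multiplication by $N(N+1)/\eps$ and by $-2/K_{\eps,N}$, and after applying the calibration of Section~\ref{sec: Validity} (which fixes $\frac{N(N+1)}{\eps}\cdot \frac{|S^{d-1}|}{\vM} K_{\eps,N}^{(d+2)/2}\cdot \frac{2}{d(d+2)}=1+\oh(1)$), this yields the $\mathfrak{N}(x_0)$-summand in the claimed limit. The quadratic ($k=2$) moment is given directly by Lemma~\ref{lem: LocCov}, producing the $\diag$-type matrix and hence the $\tfrac12 \tr[Q_0\, \diag(I_d,0)]$-summand. The cubic remainder in the Taylor expansion is bounded by $\Oh(K_{\eps,N}^{1/2})$ and therefore vanishes.

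For the variance, the key simplification afforded by the approximate (constant dual) ansatz is that the summands of $\Delta^{OT}g(X_0)$ are iid conditionally on the fixed $X_0$, since the $\sum_j \pi^\star_{0,j}$ denominator in the normalization is replaced by a deterministic constant. Thus the variance is $N$ times that of a single summand. A crude second-moment bound using $\|X_j-X_0\|\le K_{\eps,N}^{1/2}$ on the support, together with Lemma~B.5 (and Lemma~\ref{lem: FourthOrd} for the quadratic piece) gives
\[
\var\bigl(-2 K_{\eps,N}^{-1}\Delta^{OT}g(X_0)\bigr) = \Oh(N/\eps),
\]
which vanishes under the hypothesis $N/\eps\to 0$. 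Combined with the mean calculation, this delivers the claimed $L^2$ limit.

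The main technical obstacle is careful bookkeeping of three sources of error that must all be negligible simultaneously: (i) the higher-order terms in the extrinsic Taylor expansion of $\iota$ along geodesics and in the Riemann-normal volume element; (ii) the remainders in Lemma~B.5 at the parameter $r=K_{\eps,N}^{1/2}\to 0$ (in particular, the scalar curvature and $\omega$ corrections, which contribute at subleading order and thus do not appear in the limit); and (iii) the error from replacing the exact semismooth-Newton optimum by the constant ansatz, controlled by the Newton-step estimate of Section~3.2 in the regime $N^{1-2/d}\ll \eps\ll N^2$ (which is contained in our hypotheses $K_{\eps,N}\to 0$ and $N/\eps\to 0$). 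A minor subtlety is that the $j=0$ term in the sum contributes $W^\eps_{0,0}(g(X_0)-g(X_0))=0$ and can therefore be harmlessly included in the iid sum above.
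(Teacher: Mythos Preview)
Your proposal is correct and follows essentially the same route as the paper: Taylor-expand $g$ at $x_0$, split $\Delta^{OT}g(X_0)$ into linear, quadratic, and remainder pieces, compute the expectation of each via Lemma~\ref{lem: LocCov} and the second part of Lemma~B.5 in \citet{wu2018think} (your normal-coordinate derivation of the $\mathfrak{N}(x_0)$ term is exactly the content of that lemma), and bound the variance using the i.i.d.\ structure together with Lemma~\ref{lem: FourthOrd} to get $\Oh(N/\eps)$. One small slip: since the hypothesis is only $g\in C^2$, the Taylor remainder is $\oh(\|X_j-X_0\|^2)$ rather than $\Oh(\|X_j-X_0\|^3)$; the paper uses the former, and your cubic-remainder bound should be weakened from $\Oh(K_{\eps,N}^{1/2})$ to $\oh(1)$, which is still all that is needed.
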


\begin{proof}
As $g$ is twice differentiable, we can write 
\[
g(X_j) = g(x_0) + L_0 (x_0-X_j) + \frac12 (X_j -x_0)^\top Q_0  (X_j -x_0) + \oh\left ( \lVert x_0 -X_j \rVert^2 \right),
\]
where $L_0$ is the gradient of $g$ at $x_0$ and $Q_0$ is the Hessian of $g$ evaluated at $x_0$.
Plugging this result in the definition of $( \Delta^{OT} g)(x_0)$, we derive 
\begin{align*}
&( \Delta^{OT} g)(x_0)  \\
& \qquad 
= \sum_{j=1}^N W_{0,j}^\eps \left( - L_0 (X_0-X_j) - \frac12 (X_j -X_0)^\top Q_0  (X_j -X_0) + \oh\left ( \lVert X_0 -X_j \rVert^2 \right)
\right).
\end{align*}
We will split this sum into three terms and control each one separately. We will first consider the expectation and then the variance.\\
\underline{\textit{Step~1: Expectation.}}

Let us start with the second term 
\[
- \frac12\sum_{j=0}^N W_{0,j}^\eps  (X_j -X_0)^\top Q_0  (X_j -X_0) =: B
\]
The quantity in the above display is a scalar so that it is equal to its trace. Further, using the linearity and the cyclical property of the trace, it holds that
\begin{align*}
B &= - \frac12 \tr \left [ Q_0 \sum_{j=0}^N W_{0,j}^\eps    (X_j -X_0) (X_j -X_0)^\top \right ] \\
&= - \frac12 (N+1) \tr \left [ Q_0 \sum_{j=0}^N \frac{( K_{\eps, N} -c_{0,j})_+}{\eps} (X_j -X_0) (X_j -X_0)^\top  \right   ]. 
\end{align*} 
Using Lemma~\ref{lem: LocCov}, it holds that 
\begin{align*}
&\expec \sum_{j=1}^N \frac{( K_{\eps, N} -c_{0,j})_+}{\eps} (X_j -X_0) (X_j -X_0)^\top 
\\
& \qquad\qquad\qquad = 
\frac{N}{\eps} \frac{\lvert S^{d-1}\rvert}{d(d+2) \vol(\MC)} K_{\eps, N}^{\frac{d+2}{2}} K_{\eps, N}   
 \left(
 \begin{pmatrix} 
 I_{d\times d} & 0 \\ 0 & 0
 \end{pmatrix} 
 + \Oh(K_{\eps,N})\right).
\end{align*}
 
Let us now address the first term, i.e., 
\[
-L_0 \sum_{j=0}^N W_{i,j}^\eps   (X_0-X_j).
\]
The second part of Lemma~B.5 in \citet{wu2018think} reads, in our case,
\begin{align*}
&\expec \left[ (X-\iota(x_0)) f(X;r) \1\{ \lVert X - \iota(x_0)\rVert\le r \}  \right]  \\
& \qquad \qquad =  \frac{\lvert S^{d-1} \rvert}{(d+2)\vol(\MC)} \left\llbracket \frac{ J_{p,d}^\top\iota_*\nabla f(x_0;r)}{d} ,
\frac{f(x_0;r) \tilde{J}_{p,p-d}^\top \mathfrak{N}(x_0)}{2}
\right\rrbracket r^{d+2} + \Oh(r^{d+4}).
\end{align*}
It follows that 
\begin{align*}
&\expec \sum_{j=1}^N W_{0,j}^\eps   (X_0-X_j) \\
&\qquad\qquad = \frac{N(N+1)}{\eps} \frac{\lvert S^{d-1} \rvert}{(d+2)\vol(\MC)} \left\llbracket  0 ,
\frac{K_{\eps, N} \tilde{J}_{p,p-d}^\top \mathfrak{N}(x_0)}{2}
\right\rrbracket K_{\eps,N}^{\frac{d+2}{2}}   + \Oh\left( \frac{N(N+1)}{\eps}K_{\eps,N}^{(d+4)/2}\right).
\end{align*}

In view of the developments above, the expectation of the Taylor residual is negligible. \\
\underline{\textit{Step~2: Variance.}}

Let us deal with \[
-L_0 \sum_{j=1}^N W_{0,j}^\eps   (x_0-X_j).
\]
We have that 
\[\var\sum_{j=1}^N W_{0,j}^\eps   (x_0-X_j) = \frac{N(N+1)^2}{\eps^2} \var ( ( K_{\eps, N} -c_{0,j})_+ (X -x_0))
\]
Further, 
\begin{align*}
\var [ ( K_{\eps, N} -c(X,x_0))_+ (X -x_i)] &= \expec [( K_{\eps, N} -c(X,x_0))_+^2 (X -x_0)(X -x_0)^\top] \\
&- \expec [( K_{\eps, N} -c(X,x_0))_+ (X -x_i)] \expec [( K_{\eps, N} -c(X,x_0))_+ (X -x_0)^\top]\\
&= \frac{\lvert S^{d-1}\rvert}{d(d+2) \vol(\MC)} \kappa_d^{\frac{d+2}{2}} K_{\eps, N}^2  \eps
 N^{-2}  
 \left(
 \begin{pmatrix} 
 I_{d\times d} & 0 \\ 0 & 0
 \end{pmatrix} 
 + \Oh(K_{\eps,N})\right) \\
 &-  \frac{\lvert S^{d-1} \rvert^2}{(d+2)^2\vol^2(\MC)} 
 K_{\eps, N}^2
 vv^\top \kappa_d^{{d+2}}  \eps^2
 N^{-4}   + \Oh\left(K_{\eps,N}^{(d+4)/2}\eps N^{-2} K_{\eps,N}\right), 
\end{align*}
where $v$ are vectors that depend on the curvature as above.
Because of the rescaling by $K_{\eps,N}^{-1}$, 
we finally get 
\[
\var \left[
K_{\eps,N}^{-1}\sum_{j=1}^N W_{i,j}^\eps   (x_i-X_j)
\right] = \Oh\left(  K_{\eps,N}^{-2}N^2 \frac{N}{\eps^2}K_{\eps,N}^{2}N^{-2} \eps  \right) = \Oh\left(   \frac{N}{\eps}  \right).
\]
Let us now turn to the covariance matrix of 
\[
V:= \vect \left( \sum_j W_{i,j}^\eps (x_i-X_j)(x_i-X_j)^\top\right)
\]
which, using Equations~(1.3.14), (1.3.16) and (1.3.31) in \citet{kollo2005advanced},  is equal to 
\[
N \expec [(W_j^\eps)^2 (x_i -X_j)\otimes(x_i -X_j)^\top\otimes (x_i -X_j)\otimes(x_i -X_j)^\top ]- N \expec V\expec^\top V.
\]

Relying on Lemma~\ref{lem: FourthOrd}, we get that the leading order of the variance of $K_{\eps,N}^{-1} V$ is 
\[
\Oh\left(K_{\eps,N}^{-2} N^2\ \frac{N}{\eps^2}\ K_{\eps,N}^{2} K_{\eps,N}^{\frac{d+4}{2}} \right)= \Oh\left( \frac{N K_{\eps, N}}{\eps}  \right).
\]
The claim follows.
\end{proof}

\subsection{Infinitesimal generator limit and spectral convergence}

A relatively general analysis of convergence of graph Laplacians was carried out by \citet{ting2011analysis}, wherein consistency results are established for a general class of constructions leveraging connections to diffusion processes. We remark that when $\mathcal{M}$ is endowed with a uniform measure, a constant approximation of the potential is valid and so the operator resulting from quadratically regularized optimal transport falls under their framework \citep[Theorem 3]{ting2011analysis}. The assumptions are compatible with the ones that we make here, namely that $\mathcal{M}$ is a smooth, compact manifold, and the authors consider a general kernel of the form $K_N(x, y) = w_x^{(N)}(y) K_0\left( \frac{\| y - x \|}{h_N r^{(N)}_x(y)} \right)$.

In our setting where iid samples are drawn uniformly on $\mathcal{M}$, we invoke a constant potential approximation $u \sim \varepsilon^{\frac{2}{2 + d}} N^{\frac{-4}{2 + d}}$, we have (up to a multiplicative constant)
\begin{align*}
    K_N(x, y) &= \left[ \varepsilon^{\frac{2}{2 + d}} N^{\frac{-4}{d + 2}} - \| y - x \|^2 \right]_+ = \left[ 1 - \left( \dfrac{\| y - x \|}{ \varepsilon^{\frac{1}{2+d}} N^{\frac{-2}{d + 2}} }\right)^2\right]_+ = \varphi\left( \frac{\|y - x \|}{h^{(N)}}\right)
\end{align*}
Where the choice of kernel is the Epanechnikov kernel $\varphi(r) = (1 - r^2)_+$. The condition under which their theorem holds  is that $N h^{m+2}/\log N \to \infty$. In our case, this simplifies to $\varepsilon/(N \log N) \to \infty$, and this is compatible with the range of scalings $N^{1 - 2/d} \ll \varepsilon \ll N^2$ from our previous analysis.

It is further possible understand how the eigenvalues and eigenvectors of the discrete operator relate to the continuous one, relying on the recent results by \citet{garcia2020error}. Their results apply in the setting that we consider for an intrinsic dimension $d\geq2$.
Upon choosing 
\[
\eps= N^{\frac{3d+2}{2d(d+2)}} (\log N)^{\frac{p_d(d+2)}{2}}, 
\]
with $p_d=3/4$ if $d=2$ and $p_d=1/d$ if $d\geq3$, the rate of convergence of the eigenvalues and eigenfunctions\footnote{We refer to the paper for an explicit description of how the eigenvector is interpolated to compute the norm between that interpolation and the eigenfunction on the manifold. } is
\[
\Oh\left(  \sqrt{\frac{(\log N)^{p_d}}{N^{1/d}}} \right)
\]
almost surely \citep[Theorems~1 and~5]{garcia2020error}.
%
%
\section{Equispaced points on the circle}

We finally consider an example for which the computations can be explicitly carried out: the case of equidistant points on the circle. 

\subsection{First-order optimal potentials}
\label{sec: EquiCirc}

We finish this section about the rates in the discrete case in a one-dimensional deterministic example.
Consider $N$ points that are equispaced on the circle each with mass $1/N$. 
Set $D_j$ to be the $j$-th squared Euclidean distance in the sorted list of all distances from one point to the others. 
We thus have 
\[
D_j = \left(2\sin\left(  \frac{\pi j}{N}\right)\right)^2= \frac{4\pi^2j^2}{N^2} + \Oh\left(\frac{j^4}{N^4} \right)
\]

We aim at solving 
\[
\sum_{j=1}^N (y - D_j )_+ = \frac{\varepsilon}{N}.
\]

We get, for $y$ small, that there exists $k$ such that 
\[
\frac{4\pi^2}{N^2} (k)^2\le  y \le \frac{4\pi^2}{N^2} (k+1)^2,
\]
that
\begin{align*}
    (2k+1)y - 2 \frac{4\pi^2}{N^2}\ \sum_{j=1}^k j^2 &= \frac{\eps}{N}.
\end{align*}

Thus, there exists $\alpha$ such that  
\begin{align*}
    (2k+1)(k+\alpha)^2 - 2\  \sum_{j=1}^k j^2 &= \frac{\eps N}{4\pi^2}\\
    (2k+1)(k+\alpha)^2 - \frac13  k(k+1)(2k+1) &= \frac{\eps N}{4\pi^2}.
\end{align*}

It follows that by matching the largest order for $k$
\[
\frac{4k^3}{3} \approx \frac{\eps N}{4\pi^2}
\]
and thus 
 \[
y \approx  \frac{4\pi^2}{N^2} \left( \frac{3\eps N }{16 \pi^2 }\right)^{2/3}
\]

Alternatively, to asses the quality of the approximation above,  consider 
\[
4\sin^2\left(  \frac{\pi k}{N}\right) \le  y < 4\sin^2\left(  \frac{\pi (k+1)}{N}\right),
\]
to derive 
\begin{equation}
\label{eq: Alternative}
(2k+1)4 \sin^2\left(  \frac{\pi (k+\alpha)}{N}\right) - 2 \sum_{j=1}^k 4 \sin^2\left(  \frac{\pi j}{N}\right) = \frac{\eps}{N}
\end{equation}
It further holds that 
\begin{align*}
\sum_{j=1}^k 4 \sin^2\left(  \frac{\pi j}{N}\right) &= 1+2k - \frac{\sin\left(  \frac{\pi (2k+1)}{N}\right)}{\sin(\pi/N)} \\
&= 1+2k - \frac{(2k+1)\pi /N - \frac{(2k+1)^3\pi^3 }{6N^3} + \Oh\big((2k+1)^5/N^5\big) }{\pi/N + \Oh(1/N^3) } \\
&= 1+2k - \frac{(2k+1)  - \frac{(2k+1)^3\pi^2 }{6 N^2} + \Oh\big((2k+1)^5/N^4\big) }{1 + \Oh(1/N^2) } \\
&=  \frac{(2k+1)^3\pi^2 }{6 N^2} + \Oh\left( \frac{2k+1}{N^2}\right) + \Oh\left(\frac{(2k+1)^5}{N^4}\right) \\
&=  \frac{4k^3\pi^2 }{3 N^2} + \Oh\left( \frac{k^2}{N^2}\right)  + \Oh\left(\frac{(2k+1)^5}{N^4}\right).
\end{align*}
 Plugging this result in \eqref{eq: Alternative}, one gets 
 \[
(2k+1)\left[4 \pi^2 (k+\alpha)^2 + \Oh\left(\frac{k^4}{N^4}\right) \right]   - 2 \frac{4k^3\pi^2 }{3 } + \Oh\left( k^2\right) + \Oh\left(\frac{(2k+1)^5}{N^2}\right)= \eps N
 \]
which gives 
\[
\frac{4k^3 }{3 } + \Oh\left( k^2\right)+ \Oh\left(\frac{k^5}{N^4}\right) + \Oh\left(\frac{(2k+1)^5}{N^2}\right)= \frac{\eps N}{4\pi^2}.
\]
This matches with the other approximation.
\subsection{Limiting operator}

Let us place ourselves in the same setting as Section~\ref{sec: EquiCirc} again. 
We have seen that the optimal potential must behave as $\kappa \eps^{2/3}N^{-4/3}$ at the first order.  

\begin{thm} 

Consider $g \in C^2$, $g: \reals^2 \to \reals$. Consider again a set $\{x_i\}_{i=1}^N$ of $N$ equispaced points on the unit circle. For simplicity, choose $i\le N$ such that $x_i=(0,1)$. Denote by $Q_i$, the Hessian of $g$ at $x_i$.  Then, defining 
 \[
( \Delta^{OT} g)(x_i) := \sum_{j=1}^N W_{i,j} \big( g(x_i) -g(x_j)\big), 
 \]
 with $W$ as above, there exist constants $C_1,C_2$, such that 
 \begin{align*}
C_1 \eps^{\tfrac{-2}{3}}N^{\tfrac{4}{3}} ( \Delta^{OT} g)(x_i)   &\to C_2 \frac{\partial g(z,y)} {\partial y} \Big\vert_{(z,y)=(0,1)}
+ \tr \left[ 
Q_i  
\begin{pmatrix} 0 & 0 \\ 0 & 1\end{pmatrix}
\right  ] \\
&=  C_2 \frac{\partial g(z,y)} {\partial y} \Big\vert_{(z,y)=(0,1)} + \frac{\partial^2 g(z,y)}{\partial z^2} \Big\vert_{(z,y)=(0,1)},
\end{align*}
for $ \lim_{N\to\infty } \eps N =\infty $.
\end{thm}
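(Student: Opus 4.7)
My plan is a deterministic, symmetry-driven computation that parallels the proof of the preceding theorem but without any need for variance control. First, I Taylor-expand
\[
g(x_j) - g(x_i) = L_i \cdot (x_j - x_i) + \tfrac12 (x_j - x_i)^\top Q_i (x_j - x_i) + \oh\bigl(\| x_j - x_i \|^2\bigr),
\]
and substitute into the definition of $\Delta^{OT} g$ to obtain
\[
(\Delta^{OT} g)(x_i) = -L_i \cdot \sum_{j} W_{i,j}(x_j - x_i) - \tfrac12 \sum_j W_{i,j} (x_j - x_i)^\top Q_i (x_j - x_i) + R_N,
\]
where $R_N = \oh(K_{\eps, N})$ because $\| x_j - x_i \|^2 \le K_{\eps, N}$ whenever $W_{i,j} \neq 0$ and $\sum_j W_{i,j} = 1$.

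The key simplification comes from symmetry. Writing $x_j - x_i = \bigl(\sin(2\pi j/N),\, \cos(2\pi j/N) - 1\bigr)$, the equispaced configuration is invariant under the involution $j \mapsto N-j$, which is the reflection about the normal line through $x_i$. Since the weights $W_{i,j}$ depend only on $D_j$ (also invariant under this involution), every weighted linear sum of the odd tangential coordinate $\sin(2\pi j/N)$ vanishes, killing both the tangential component of the first-order term and the off-diagonal entries of the quadratic form. Using the exact identity $\cos(2\pi j/N) - 1 = -D_j/2$, the first-order contribution collapses to $\tfrac12 \partial_y g(0,1)\, S$, where $S := \sum_j W_{i,j} D_j$. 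For the second-order contribution, $\sin^2(2\pi j/N) = D_j - D_j^2/4$ yields a tangential term $\partial_{zz} g(0,1)\, S$ at leading order, while the normal term $(\cos(2\pi j/N) - 1)^2 = D_j^2/4$ is of order $K_{\eps,N}\, S$ and therefore subleading.

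It only remains to evaluate $S$. The row-stochasticity constraint analysed in Section~\ref{sec: EquiCirc} gives $\sum_j (K_{\eps,N} - D_j)_+ = \eps/N$, so $S = (N/\eps) \sum_j (K_{\eps,N} - D_j)_+ D_j$. I would substitute the quantile approximation $D_j \approx 4\pi^2 j^2/N^2$, valid for $j \le k \approx (3\eps N/(16\pi^2))^{1/3}$, together with $K_{\eps,N} \approx 4\pi^2 k^2/N^2$, both derived in that section. The numerator then reduces to $(16\pi^4/N^4) \sum_{j=1}^k j^2(k^2 - j^2)$, a standard Faulhaber-type sum with leading term $(32\pi^4/15)\,k^5/N^4$. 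Substituting the value of $k$ gives $S \sim c\, \eps^{2/3} N^{-4/3}$ for an explicit numerical constant $c$, and in particular the ratio $S/K_{\eps, N}$ is a fixed positive constant independent of $\eps$ and $N$.

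Assembling these pieces yields $(\Delta^{OT} g)(x_i) = \tfrac{S}{2}\bigl( \partial_y g(0,1) - \partial_{zz} g(0,1) \bigr) + \oh(K_{\eps, N})$; multiplying by an appropriately signed multiple of $K_{\eps, N}^{-1} \asymp \eps^{-2/3} N^{4/3}$ produces the claimed limit with $C_1$ and $C_2$ determined by $S/K_{\eps, N}$. The main obstacle is purely bookkeeping: I will need to check that the higher-order remainders in the three approximations I invoke -- the $\Oh(j^4/N^4)$ correction to $D_j = 4\sin^2(\pi j/N)$, the $\Oh(D_j^2)$ correction to the quadratic form, and the $\Oh(k^2)$ correction in the cutoff equation of Section~\ref{sec: EquiCirc} -- all remain strictly subleading after the $K_{\eps, N}^{-1}$ rescaling under the hypothesis $\eps N \to \infty$, and that the first-order approximation $\pi_{i,j} \approx (K_{\eps, N} - D_j)_+/\eps$ to the optimal plan does not contaminate the leading order.
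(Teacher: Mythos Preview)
Your proposal is correct and follows essentially the same approach as the paper: Taylor expand $g$ at $x_i$, exploit the reflection symmetry $j\mapsto N-j$ to kill tangential and off-diagonal contributions, and then compute the surviving weighted sums explicitly using the approximation $D_j\approx 4\pi^2 j^2/N^2$ and the cutoff $k\approx (3\eps N/(16\pi^2))^{1/3}$ from Section~\ref{sec: EquiCirc}. Your reduction to the single scalar $S=\sum_j W_{i,j}D_j$ via the exact identities $\cos(2\pi j/N)-1=-D_j/2$ and $\sin^2(2\pi j/N)=D_j-D_j^2/4$ is a tidier organisation than the paper's direct handling of the matrix $M_j$, but the underlying computation and the order-of-magnitude checks are the same.
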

\begin{proof}
As $g$ is twice differentiable, we can write 
\[
g(x_j) = g(x_i) + L_i (x_i-x_j) + \frac12 (x_j -x_i)^\top Q_i  (x_j -x_i) + \oh\left ( \lVert x_i -x_j \rVert^2 \right),
\]
where $L_i$ is linear and $Q_i$ is the Hessian of $g$ evaluated at $x_i$.
Plugging this result in the definition of $( \Delta^{OT} g)(x_i)$, we derive 
\begin{align*}
&( \Delta^{OT} g)(x_i)  \\
& \qquad 
= \sum_{j=1}^N W_{i,j} \left( - L_i (x_i-x_j) - \frac12 (x_j -x_i)^\top Q_i  (x_j -x_i) + \oh\left ( \lVert x_i -x_j \rVert^2 \right)
\right).
\end{align*}
We will split this sum into three terms and control each one separately. 
Let us start with the the second term 
\[
- \frac12\sum_{j=1}^N W_{i,j}  (x_j -x_i)^\top Q_i  (x_j -x_i) =: B.
\]
The quantity in the above display is a scalar so that it is equal to its trace. Further, using the linearity and the cyclical property of the trace, it holds that
\begin{align*}
B &= - \frac12 \tr \left [ Q_i \sum_{j=1}^N W_{i,j}    (x_j -x_i) (x_j -x_i)^\top \right ].
\end{align*} 
Relabelling the points from closest to $i$ to furthest, computing explicitly $(x_j -x_i) (x_j -x_i)^\top$, we get 
\begin{align*}
B &= -  N \tr \left [ Q_i \sum_{j=1}^{\lfloor N/2\rfloor} \frac{(\kappa \eps^{2/3}N^{-4/3} - \tfrac{4\pi^2 j^2}{N^2} + \Oh(j^4/N^4) )_+}{\eps}  M_j   
\right ],
\end{align*} 
where \[
M_j := \begin{pmatrix} 
\vspace{2mm}
\sin^2(\frac{2\pi j}{N}) & - 2 \sin^2(\frac{\pi j}{N} ) \sin(\frac{2\pi j}{N} ) \\ 

- 2 \sin^2(\frac{\pi j}{N} ) \sin(\frac{2\pi j}{N} ) & 4 \sin^4(\frac{\pi j}{N}  ) \sin^2(\frac{2\pi j}{N}  )
\end{pmatrix}.
\]
A first order development gives 
\[
M_j \approx
\frac{4\pi^2}{N^2} 
\begin{pmatrix} 
\vspace{2mm}
j^2  &  -\frac{\pi}{2N}  j^3 \\ 

-\frac{\pi}{2N}  j^3  &  \frac{\pi^2 j^4}{N^2}   
\end{pmatrix}.
\]
Thus, 
\[
B \approx - \frac{4 \pi^2}{N}\tr \left[ 
Q_i  
E
\begin{pmatrix} 1 & 0 \\ 0 & 0\end{pmatrix}
\right  ],
\]
where 
\[
E:= 
\frac{4\pi^2}{\eps N^2}
\left( 
\frac{\kappa}{4\pi^2} (\eps N)^{2/3}  \left( \frac{1}{3} \left( \frac{\kappa}{4\pi^2}\right)^{3/2} \eps N + \Oh\left((\eps N)^{2/3}\right) \right) - 
\frac{1}{5} \left( \frac{\kappa}{4\pi^2}\right)^{5/2} (\eps N)^{5/3} +\Oh\left((\eps N)^{4/3}\right)
\right ) .
\]
So that  
\[
B \approx  \eps^{\tfrac{2}{3}}N^{\tfrac{-4}{3}} C_1   \tr \left[ 
Q_i  
\begin{pmatrix} 1 & 0 \\ 0 & 0\end{pmatrix}
\right  ] +  \Oh\left(\frac{\eps^{1/3} N^{-2/3} }{N} \right),
\]
for a constant $C_1$.
Let us now consider the Taylor residual.  It holds that 
\begin{align*}
 \sum_{j=1}^N W_{i,j}\ \oh\left ( \lVert x_i -x_j \rVert^2 \right) 
 & \approx  N \sum_{j=1}^N \frac{(\kappa \eps^{2/3}N^{-4/3} - \lVert x_i -x_j \rVert^2)_+}{\eps}\ \oh\left( \lVert x_i -x_j \rVert^2 \right)\\
  & \approx 2 N \sum_{j=1}^{\lfloor N/2 \rfloor} \frac{(\kappa \eps^{2/3}N^{-4/3} - \tfrac{4\pi^2 j^2}{N^2} + \Oh(j^4/N^4) )_+}{\eps}\ \oh\left( \frac{j^2}{N^2}\right).
\end{align*}
Comparing with the developments above, we see that the sum is a weighted sum of $\oh(j^2/N^2)$ whereas, for $B$ it was the sum of terms behaving like $j^2/N^2$ with the same weights. It follows that 
\[
\eps^{\tfrac{-2}{3}}N^{\tfrac{4}{3}} \sum_{j=1}^N W_{i,j}\ \oh\left ( \lVert x_i -x_j \rVert^2 \right)  =\oh(1).
\]
Let us finally address the first term, i.e., 
\[
-L_i \sum_{j=1}^N W_{i,j}   (x_i-x_j) ,
\]
Owing to the symmetry of the problem, this is constant times the normal vector at the point.
It is thus non zero in the $y$ direction. Remark that the vector $x_j - x_i$ has a component in the $y$ direction equal to $2\sin^2(\pi j /N  )$ and it thus has the same leading order as $M_j$ in terms of $\eps,N$. The claim follows.
\end{proof}
%
%
\section{Simulations}

In this section we exhibit the size of the optimal potentials obtained from the semismoothed Newton algorithm proposed in \citet{lorenz2021quadratically} and adapted to our setting in \citet{matsumoto2022beyond}.

\subsection{$d$-Sphere}
We now exhibit the behaviour of the optimal potentials for $N$ random points on the $d$-Sphere and various parameters $\eps$. In $d = 1, 2, 3$, $N = 10^3$ points were sampled uniformly by sampling from $S^d$ first a standard Gaussian and normalizing. We numerically solved the corresponding discrete optimal transport problem with $\varepsilon$ in the range $[10^{-3}, 10^5]$ and plotted $\log(\overline{u})$ against $\log(\varepsilon)$. For $\varepsilon$ sufficiently large, we estimated the exponent $\alpha$ for the relationship $u \sim \varepsilon^{\alpha}$. Our empirical findings agree with the exponent $\frac{2}{2 + d}$.
\begin{figure}[h!]
	\begin{center}
		\includegraphics[width=0.32\textwidth, ]{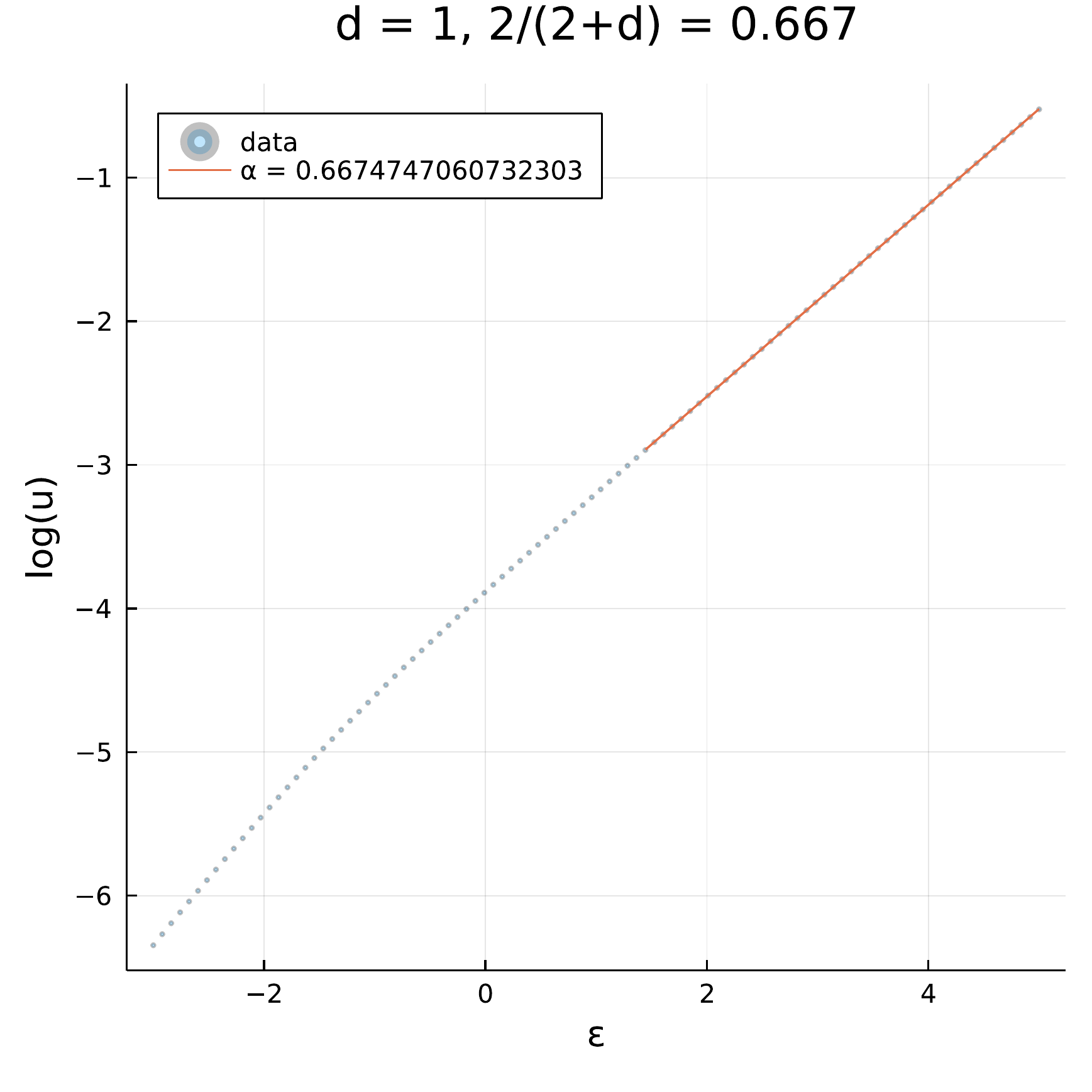}
		\includegraphics[width=0.32\textwidth, ]{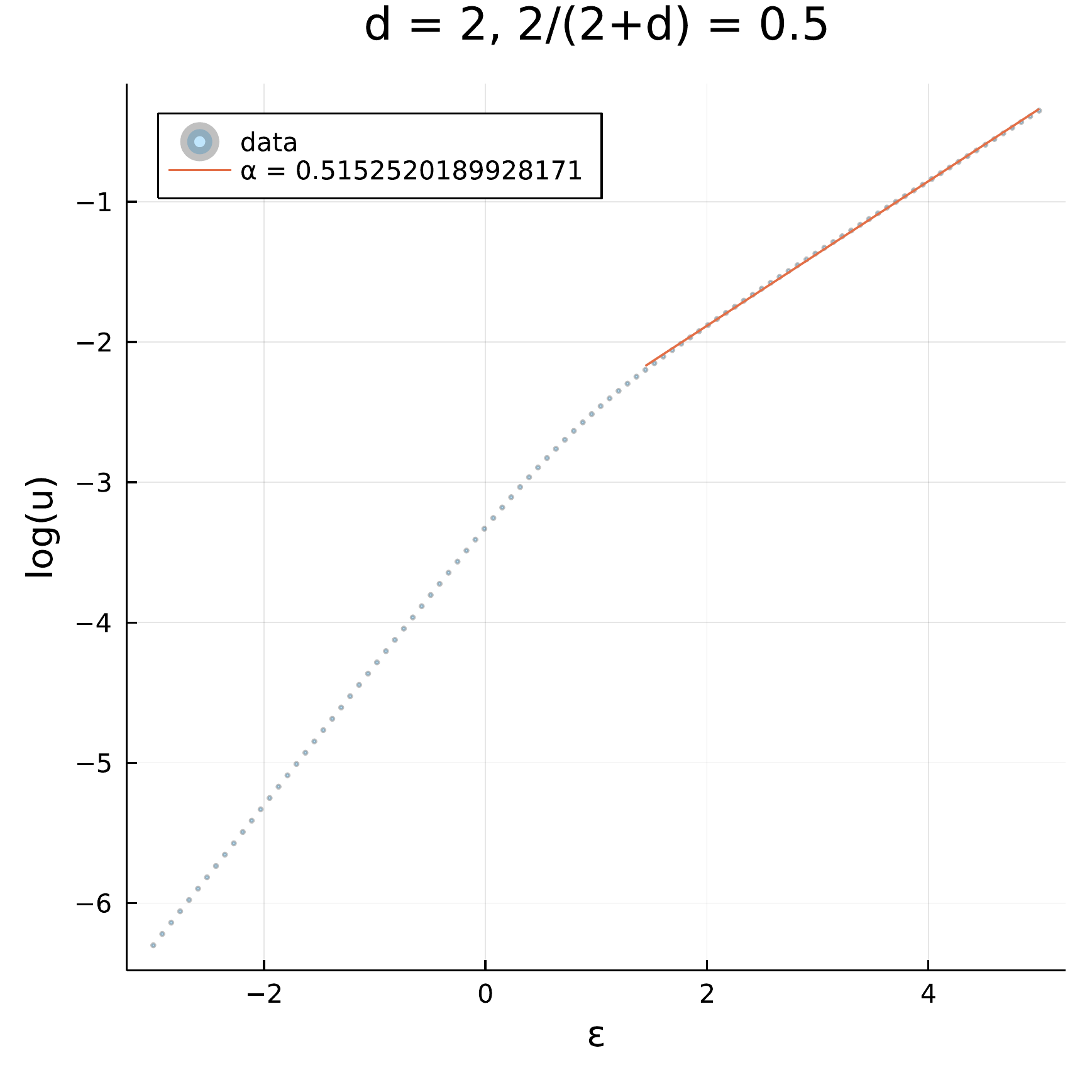}
		\includegraphics[width=0.32\textwidth, ]{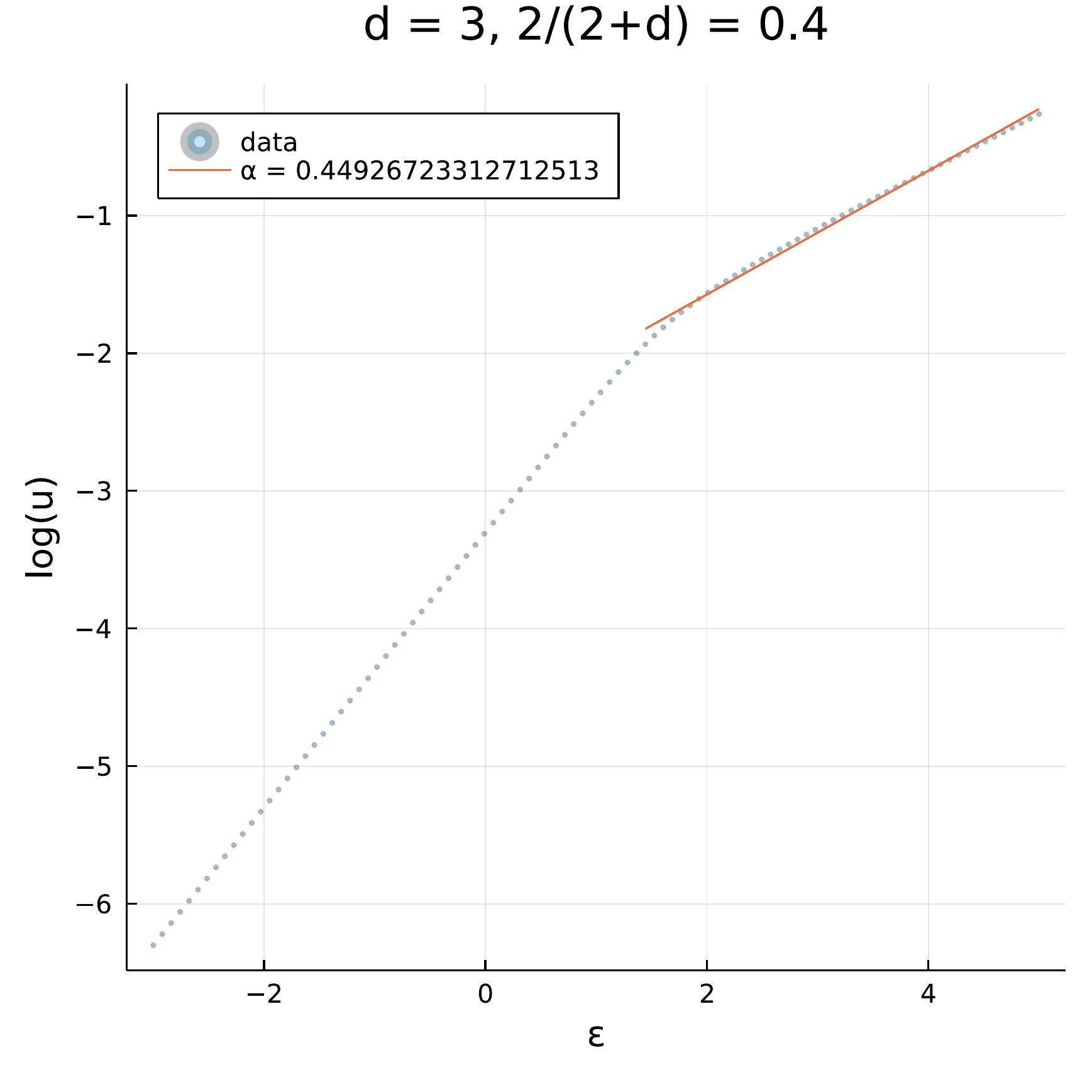}
	\end{center}
	\caption[]{Scaling of dual potential $u$ on the $d$-Sphere. }
	\label{fig:Sphere}
\end{figure}

\subsection{Torus}

Next, we investigate the behaviour of the operator $\Delta^{OT}$ in the discrete setting where points are sampled from the uniform distribution on the 2-dimensional torus with major and minor radii $R = 1, r = 1/2$. We fix a point $x_0 = (0, 1/2, 0)$ at which the tangent space $T_{x_0} \MC$ is spanned by $e_1, e_3$. We then consider a function $f(x, y, z) = 3x^2 + 5y^2 + 7z^2$. For $N = 100, 250, 500, 1000, 2500$ points sampled from the torus, we calculated the $(N+1) \times (N+1)$ coupling $\pi$ by solving \eqref{eq:qot}, normalized following \eqref{eq: GraphWeights}, and then computed the quantity $K_{\varepsilon, N}^{-1} (\Delta^{OT} f)(x_0)$. 

Motivated by the asymptotic scalings we derived, we tried setting $\varepsilon \propto N^\alpha$ for varying exponents: $\alpha = 2$ which should correspond to a fixed regularization level in the continuous case (and we do not expect convergence to the Laplacian in this case), and $\alpha = 1.05, 1.125, 1.25, 1.5, 1.75$ which all fall within the regime where Theorem 1 applies. We show in Figure \ref{fig:Torus} the values of $K_{\varepsilon, N}^{-1} (\Delta^{OT} f)(x_0)$ over 10 repeats at each value of $N$. 

We see that when $\alpha = 2$, the quantity $K_{\varepsilon, N}^{-1} (\Delta^{OT} f)(x_0)$ stabilizes around a fixed value as $N$ increases. This agrees with our understanding that $\varepsilon \propto N^2$ in the discrete setting corresponds to the continuous case of empirical distributions with a fixed value of $\varepsilon$. On the other hand, when $1 < \alpha < 2$ we observe a pattern of values appears to converge around a different value. Importantly, for various $1 < \alpha < 2$, these values are similar -- this supports the scaling relation of Theorem 1 and suggests that the quantity is converging to the value (up to a constant independent of $\varepsilon, N$) of the Laplace-Beltrami operator at $x_0$.

\begin{figure}[h!]
	\begin{center}
    	\includegraphics[width = 0.325\linewidth]{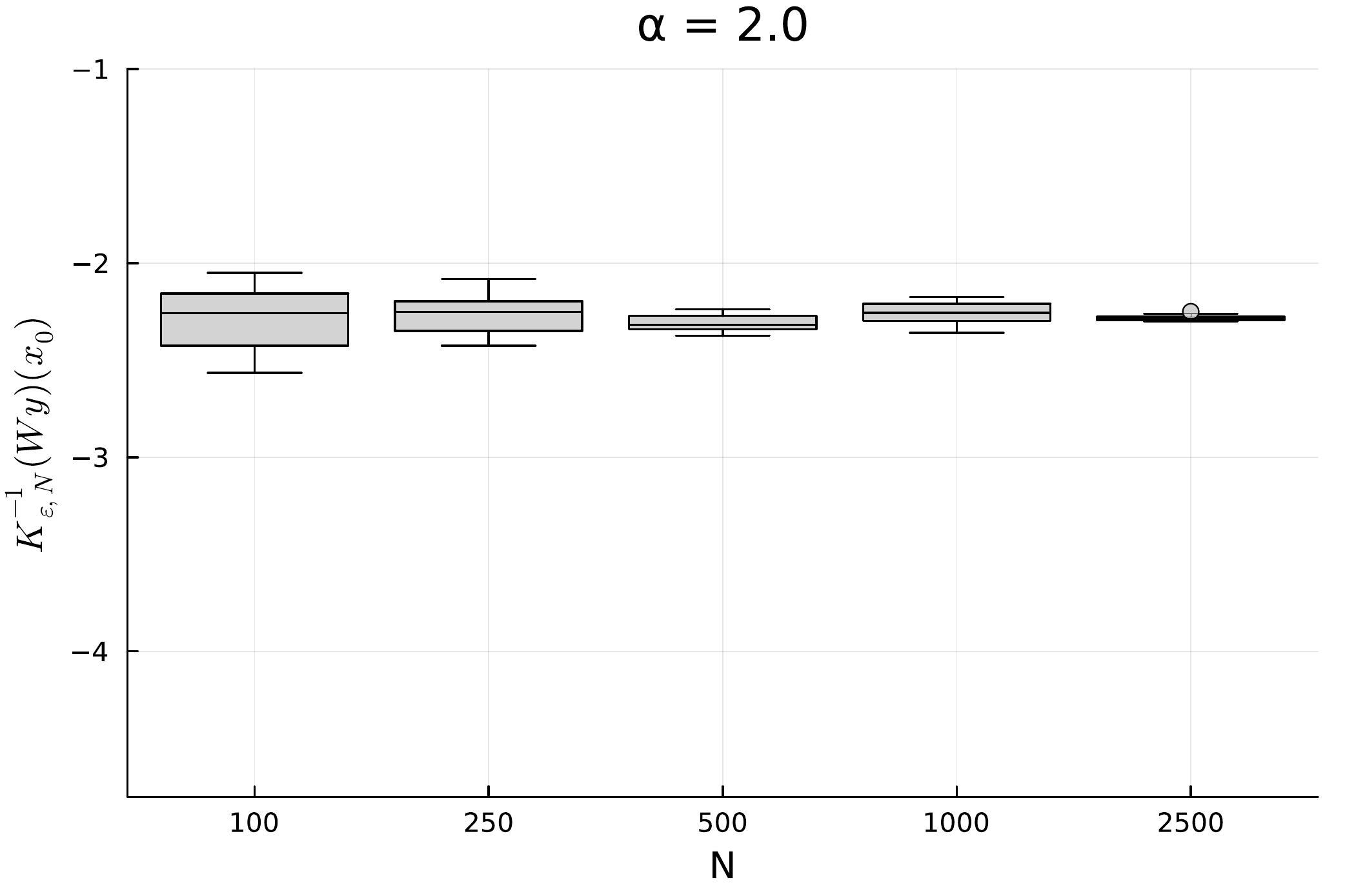}
    	\includegraphics[width = 0.325\linewidth]{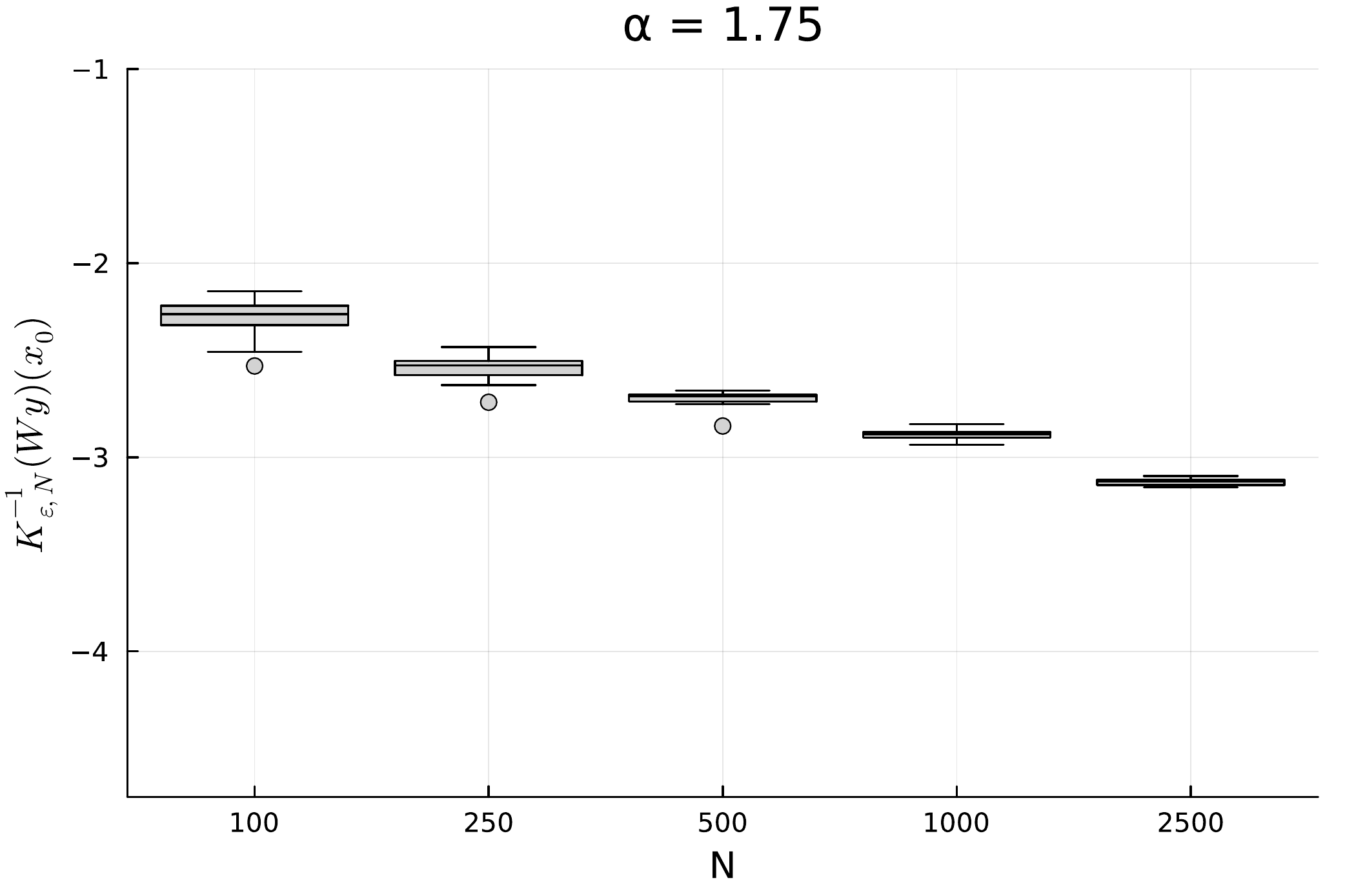}
    	\includegraphics[width = 0.325\linewidth]{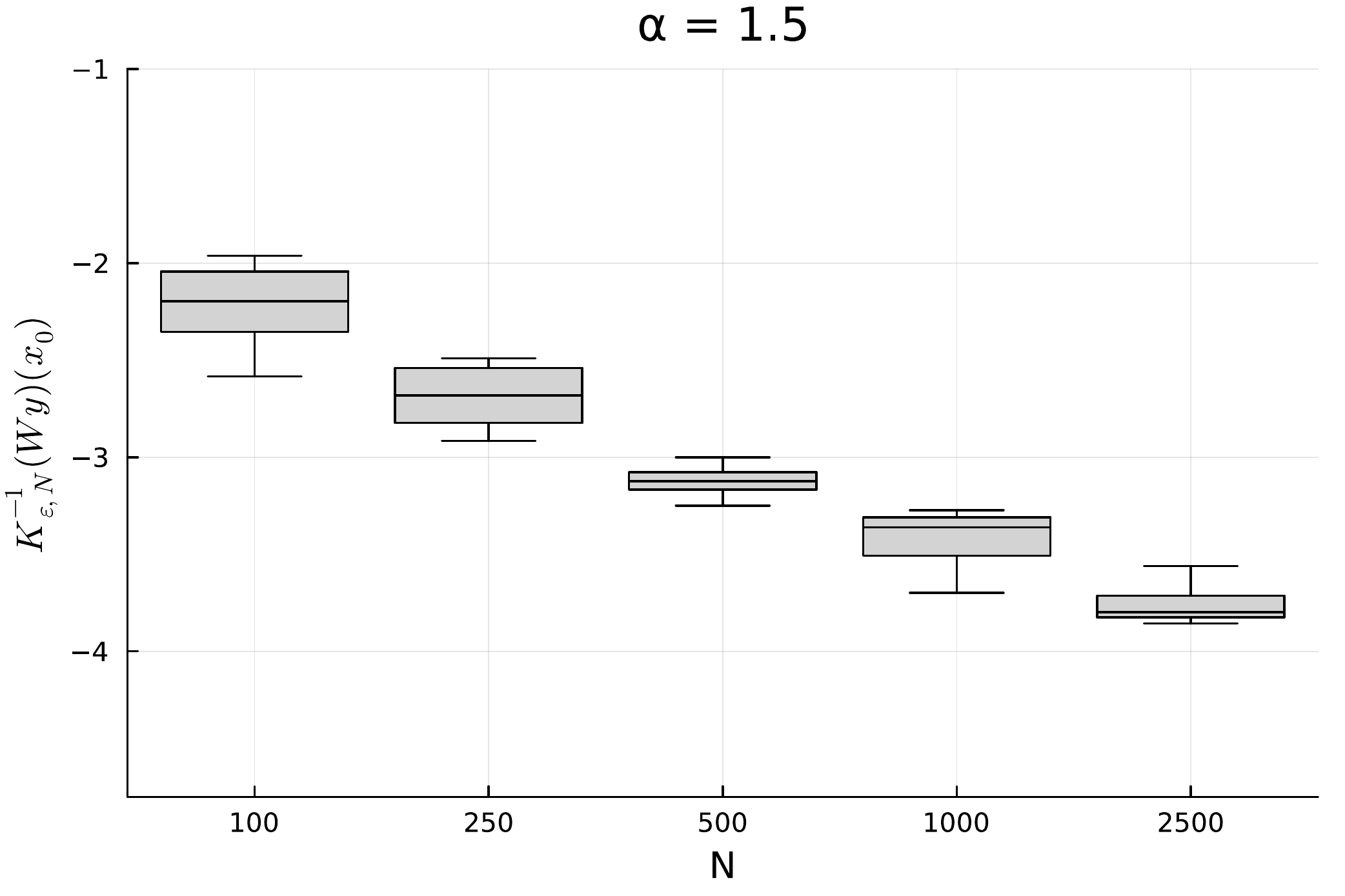} \\ 
    	\includegraphics[width = 0.325\linewidth]{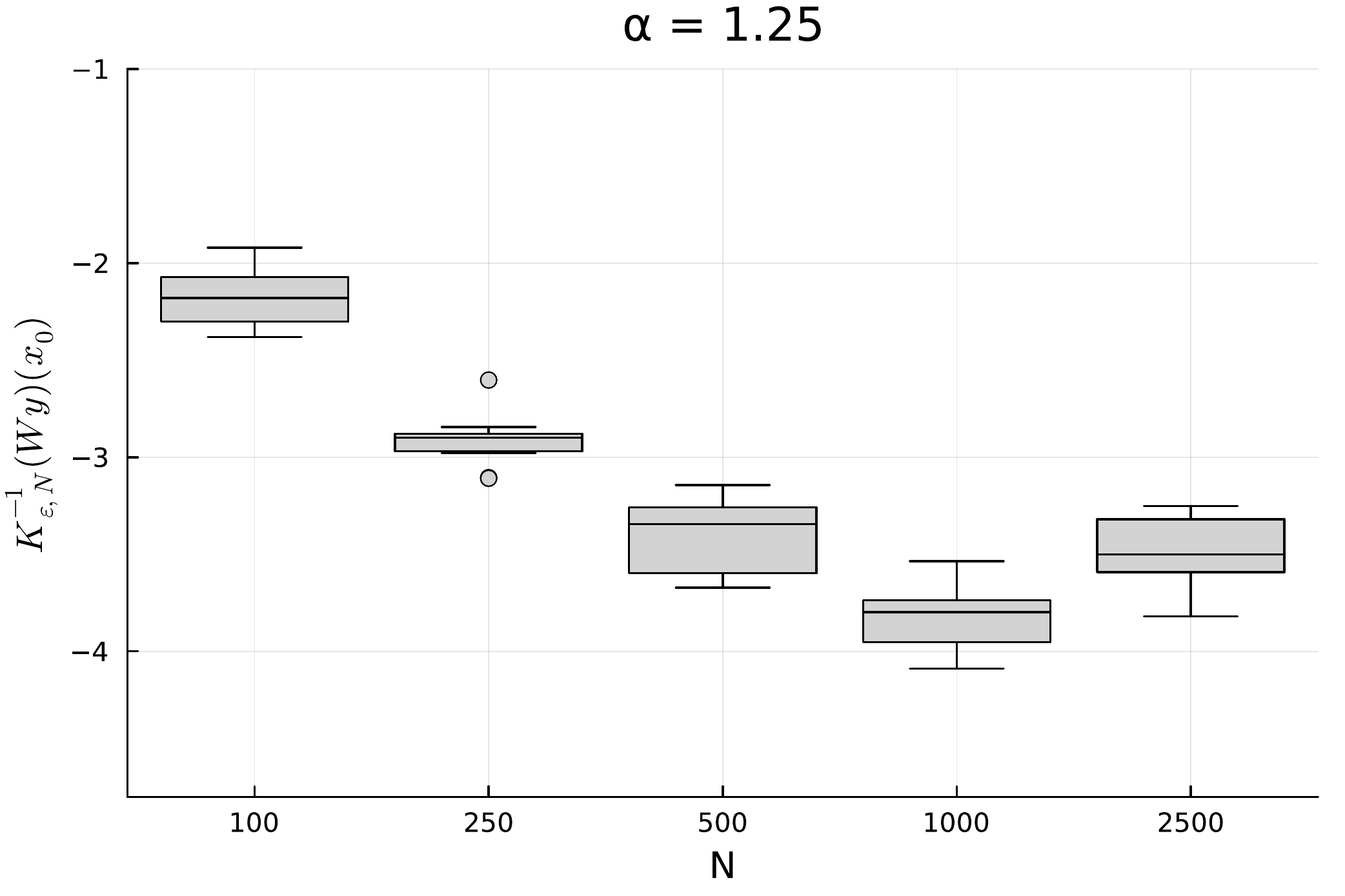} 
    	\includegraphics[width = 0.325\linewidth]{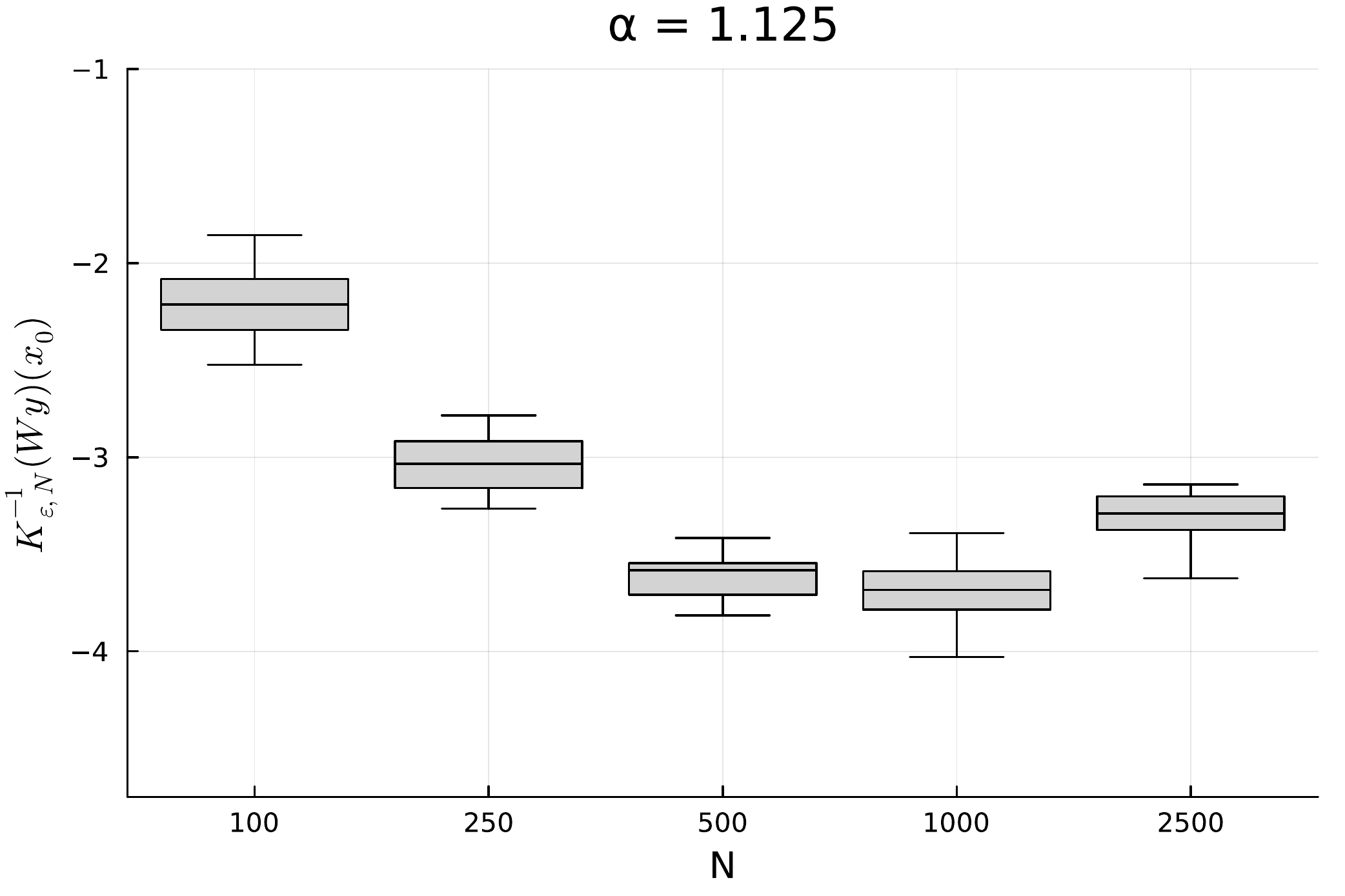}
    	\includegraphics[width = 0.325\linewidth]{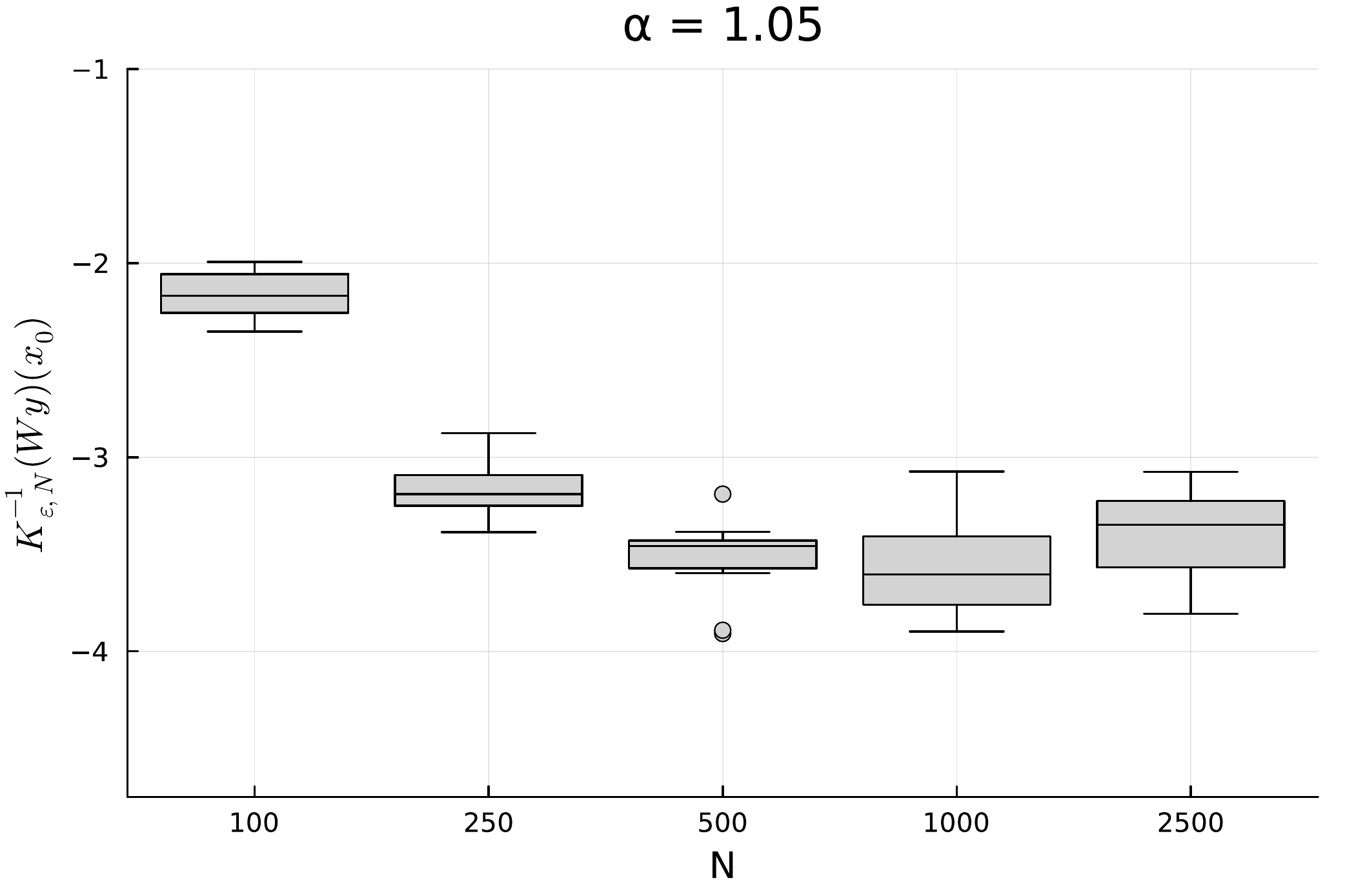}
	\end{center}
	\caption[]{Estimate of Laplacian $K_{\varepsilon, N}^{-1} (\Delta^{OT} f)(x_0)$ for varying sample sizes $N$, and $\varepsilon \propto N^\alpha$, for various choices $  \alpha = 2, 1.75, 1.5, 1.25, 1.125, 1.05$.}
	\label{fig:Torus}
\end{figure}

\printbibliography

@article{garcia2020error,
  title={Error estimates for spectral convergence of the graph Laplacian on random geometric graphs toward the Laplace--Beltrami operator},
  author={Garc{\'\i}a Trillos, Nicol{\'a}s and Gerlach, Moritz and Hein, Matthias and Slep{\v{c}}ev, Dejan},
  journal={Foundations of Computational Mathematics},
  volume={20},
  number={4},
  pages={827--887},
  year={2020},
  publisher={Springer}
}

@book{tsybakov2008introduction,
  title={Introduction to Nonparametric Estimation},
  author={Tsybakov, Alexandre},
  isbn={9780387790527},
  lccn={2008939894},
  series={Springer Series in Statistics},
  year={2008},
  publisher={Springer New York}
}

@book{kollo2005advanced,
  title={Advanced multivariate statistics with matrices},
  author={Kollo, T{\~o}nu and {von Rosen}, Dietrich},
  year={2005},
  series={Mathematics and Its Applications},
  publisher={Springer}
}

@article{matsumoto2022beyond,
  title={Beyond knn: Adaptive, sparse neighborhood graphs via optimal transport},
  author={Matsumoto, Tetsuya and Zhang, Stephen and Schiebinger, Geoffrey},
  journal={ArXiv:2208.00604},
  year={2022}
}

@article{lorenz2021quadratically,
  title={Quadratically regularized optimal transport},
  author={Lorenz, Dirk A and Manns, Paul and Meyer, Christian},
  journal={Applied Mathematics \& Optimization},
  volume={83},
  number={3},
  pages={1919--1949},
  year={2021},
  publisher={Springer}
}

@article{wu2018think,
  title={Think globally, fit locally under the manifold setup: Asymptotic analysis of locally linear embedding},
  author={Wu, Hau-Tieng and Wu, Nan},
  journal={The Annals of Statistics},
  volume={46},
  number={6B},
  pages={3805--3837},
  year={2018},
  publisher={Institute of Mathematical Statistics}
}

@article{lavenant2018dynamical,
  title={Dynamical optimal transport on discrete surfaces},
  author={Lavenant, Hugo and Claici, Sebastian and Chien, Edward and Solomon, Justin},
  journal={ACM Transactions on Graphics (TOG)},
  volume={37},
  number={6},
  pages={1--16},
  year={2018},
  publisher={ACM New York, NY, USA}
}

@book{vazquez2007porous,
  title={The porous medium equation: mathematical theory},
  author={V{\'a}zquez, Juan Luis},
  year={2007},
  publisher={Oxford University Press on Demand}
}

@article{peyre2015entropic,
  title={Entropic approximation of Wasserstein gradient flows},
  author={Peyr{\'e}, Gabriel},
  journal={SIAM Journal on Imaging Sciences},
  volume={8},
  number={4},
  pages={2323--2351},
  year={2015},
  publisher={SIAM}
}

@article{leonard2013survey,
  title={A survey of the Schr\"odinger problem and some of its connections with optimal transport},
  author={L{\'e}onard, Christian},
  journal={ArXiv:1308.0215},
  year={2013}
}

@inproceedings{ting2011analysis,
  title={An analysis of the convergence of graph Laplacians},
  author={Ting, Daniel and Huang, Ling and Jordan, Michael I},
  booktitle={Proceedings of the 27th International Conference on International Conference on Machine Learning},
  pages={1079--1086},
  year={2010}
}

\end{document}